\theoremstyle{plain}
\newtheorem{thm}{Theorem}[section]
\newtheorem{theorem}[thm]{Theorem}
\newtheorem{lemma}[thm]{Lemma}
\newtheorem{cor}[thm]{Corollary}
\theoremstyle{remark}
\newtheorem{remark}[thm]{Remark}
\newtheorem{example}[thm]{Example}
\newtheorem*{ackn}{Acknowledgment}
\theoremstyle{definition}
\newtheorem{definition}[thm]{Definition}
\crefname{section}{Section}{Sections}
\crefname{thm}{Theorem}{Theorems}
\Crefname{thm}{Theorem}{Theorems}
\crefname{theorem}{Theorem}{Theorems}
\Crefname{theorem}{Theorem}{Theorems}
\crefname{lemma}{Lemma}{Lemmas}
\Crefname{lemma}{Lemma}{Lemmas}
\crefname{corollary}{Corollary}{Corollaries}
\Crefname{corollary}{Corollary}{Corollaries}
\crefname{cor}{Corollary}{Corollaries}
\Crefname{cor}{Corollary}{Corollaries}
\crefname{proposition}{Proposition}{Propositions}
\Crefname{proposition}{Proposition}{Propositions}
\crefname{prop}{Proposition}{Propositions}
\Crefname{prop}{Proposition}{Propositions}
\crefname{remark}{Remark}{Remarks}
\Crefname{remark}{Remark}{Remarks}
\crefname{rem}{Remark}{Remarks}
\Crefname{rem}{Remark}{Remarks}
\crefname{example}{Example}{Examples}
\Crefname{Example}{Example}{Examples}
\crefname{exercise}{Exercise}{Exercises}
\Crefname{exercise}{Exercise}{Exercises}
\crefname{definition}{Definition}{Definitions}
\Crefname{definition}{Definition}{Definitions}
\crefname{dfn}{Definition}{Definitions}
\Crefname{dfn}{Definition}{Definitions}
\crefname{algorithm}{Algorithm}{Algorithms}
\Crefname{algorithm}{Algorithm}{Algorithms}
\crefname{question}{Question}{Questions}
\Crefname{question}{Question}{Questions}
\crefname{problem}{Problem}{Problems}
\Crefname{problem}{Problem}{Problems}
\crefname{notation}{Notation}{Notations}
\Crefname{notation}{Notation}{Notations}
\crefname{conjecture}{Conjecture}{Conjectures}
\Crefname{conjecture}{Conjecture}{Conjectures}
\crefname{conj}{Conjecture}{Conjectures}
\Crefname{conj}{Conjecture}{Conjectures}
\crefname{condition}{Condition}{Conditions}
\Crefname{condition}{Condition}{Conditions}
\newcommand{\ZZ}{\mathbb{Z}}
\newcommand{\NN}{\mathbb{N}}
\newcommand{\RR}{\mathbb{R}}
\newcommand{\KK}{\mathbb{K}}
\newcommand{\AAA}{\mathcal{A}}
\newcommand{\BBB}{\mathcal{B}}
\newcommand{\Der}{\operatorname{Der}}
\newcommand{\Ker}{\operatorname{Ker}}
\DeclareSymbolFont{symbolsC}{U}{txsyc}{m}{n}
\DeclareMathSymbol{\MYPerp}{\mathrel}{symbolsC}{121}
\newcommand{\der}{\partial}
\newcommand{\numof}[1]{\left|#1\right|}
\newcommand{\defit}[1]{\emph{#1}}
\newcommand{\ldoubleparenthesis}{\mathopen{{(}\!{(}}}
\newcommand{\rdoubleparenthesis}[1][]{\mathclose{{)}\!{)}_{#1}}}
\newcommand{\dps}[2]{\ldoubleparenthesis #1 \rdoubleparenthesis[#2]}
\begin{document}

\title[A basis for derivations of a Coxeter multiarrangement]
{Explicit description of a basis for derivations of a Coxeter multiarrangement of type $B_2$}

\author[S. Maehara]{Shota Maehara}
\address[S. Maehara]{Joint Graduate School of Mathematics for Innovation, Kyushu University, Fukuoka, Japan.}
\thanks{The first author was supported by WISE program (MEXT) at Kyushu University.}
\email{maehara.shota.027@s.kyushu-u.ac.jp}

\author[Y. Numata]{Yasuhide NUMATA}
\address[Y. Numata]{Department of Mathematics, Hokkaido University, Sapporo, Japan.}
\thanks{The second author was partially supported by JSPS KAKENHI Grant Number JP23K17298.}
\email{nu@math.sci.hokudai.ac.jp}

\keywords{hyperplane arrangement; derivation module; exponents}
\subjclass[2020]{32S22, 52C35}

\begin{abstract}
In this article, we consider the multiarrangements whose underlying arrangements are the Coxeter arrangement of type $B_2$.
For some special multiplicities, we give an explicit description of bases for the derivation modules. 
As an application,
we also describe the lower derivations of bases for
the derivation modules of some Coxeter multiarrangements of type $A_2$, 
which are different from ones given by Wakamiko.
\end{abstract}

\maketitle

\section{introduction}
 \label{sec:introduction}
Let $\KK$ be a field of characteristic zero,
$V$ an $l$-dimensional vector field over $\KK$,
and $S$ the symmetric algebra $S(V^\ast)$ of the dual space $V^{\ast}$ of $V$.
Fixing a basis for $V$,
we identify $V$ with $\KK^l$, and 
$S$ with $\KK[x_1,\ldots,x_l]$.
The free $S$-module
whose basis is $\Set{\der_{x_1},\ldots,\der_{x_l}}$
is denoted by $\Der(S)$,
where
$\der_{x_i}$ stands for the partial derivative operator $\frac{\der}{\der x_i}$
for $x_i$.
Let $\AAA=\Set{H_1,\ldots,H_n}$ be
a \defit{central hyperplane arrangement},
i.e., 
a finite collection of hyperplanes in the vector space $V$
such that $H_i$ contains the origin for all $i$.
Since $H_i$ is a subspace of codimension $1$,
we have $\alpha_i\in S_1$ such that $H_i = \Ker(\alpha_i)$,
where $S_i$ is the space of homogeneous polynomial of degree $i$,
and $\Ker(\alpha)=\Set{v\in V|\alpha(v)=0}$.
For each $i$, fix $\alpha_i$ such that $H_i=\Ker(\alpha_i)$.
For $m=(m_1,\ldots,m_n)\in \NN^n$
we call $m_i$ a \defit{multiplicity} of $H_i$
and $m$ a \defit{multiplicity} on $\AAA$.
We also call a pair $(\AAA,m)$ a \defit{multiarrangement}.
For a multiplicity $m$ on $\AAA$,
the \defit{derivation module} $D(\AAA,m)$ with respect to the multiarrangement $(\AAA,m)$
 is defined to be
the submodule consisting of $\theta\in \Der(S)$
such that $\theta(\alpha_i)$ is in the ideal $\alpha_i^{m_i}S$
of $S$ generated by $\alpha_i^{m_i}$.
The derivation module $D(\AAA,m)$,
introduced in Ziegler \cite{MR1000610},
is one of important algebraic objects
in theory of hyperplane arrangements.

The derivation module $D(\AAA,m)$
is a free $S$-module for some multiarrangement $(\AAA,m)$.
We say that the multiarrangement $(\AAA,m)$ is \defit{free}
if $D(\AAA,m)$ is free.
If $(\AAA,m)$ is free, then
the rank of $D(\AAA,m)$ is the dimension $l$ of $V$,
and
we have
a basis for $D(\AAA,m)$ consisting of homogeneous elements.
For a free multiarrangement $(\AAA,m)$,
the multiset of degrees of elements in a homogeneous basis
does not depend on the choice of a homogeneous basis.
The degrees of elements in a homogeneous basis are called \defit{exponents}
of the multiarrangement $(\AAA,m)$,
and the multiset is denoted by $\exp(\AAA,m)$,
which is one of important statistics for a free multiarrangement $(\AAA,m)$.
In \cite{MR1000610},
Ziegler showed
a criterion to determine the freeness of $D(\AAA,m)$,
which implies that
the sum of exponents is the sum $|m|=\sum_{i=1}^n m_i$ of multiplicities.
The criterion is
a multiarrangement version of Saito's criterion
(see also 
\cite{MR1217488,MR0586450}).
The criterion implies the following:
Let $\theta_i\in D(\AAA,m)$
be a homogeneous element of degree $e_i$.
Assume that $\Set{\theta_1,\ldots,\theta_l}$ is linearly independent over $S$.
The elements $\theta_1,\ldots,\theta_l$ form a basis for $D(\AAA,m)$
if and only if 
$\sum_{i=1}^l e_i=|m|$. 

Consider the case where the dimension $l$ of $V$ is two.
In \cite{MR1000610},
Ziegler showed
that $(\AAA,m)$ are free for all multiplicity $m$.
Hence
to describe exponents and a basis for $D(\AAA,m)$
is one of important topics to study multiarrangements in the two-dimensional vector space $V$.
The general description of
the exponents  is, however, still difficult
as in Wakefield and Yuzvinsky \cite{MR2309190}.
For some specific multiarrangements,
bases for $D(\AAA,m)$ are known.
For example,
a basis for $D(\AAA,m)$ with $\numof{\AAA}=3$
is described with generalized binomial coefficients in \cite{MR2328057}.
It is also described with integral expressions
in \cite{2309.01287}.

The main purpose of this article is
to give an explicit description of a basis for
$D(\AAA,m)$ in the case
where $\AAA$ is the Coxeter arrangement of type $B_2$.
Moreover,
for $D(\AAA,m)$
of a special multiplicity on $\AAA$ with $\numof{\AAA}=3$,
we will give yet another description
of the basis element whose degree is the lowest,
which is unique up to scalar.

This article is organized as follows: 
In \cref{sec:notation}, 
we recall some definitions and theorems about 
multiarrangements on a 2-dimensional vector space,
especially for the Coxeter multiarrangements of type $B_2$.
In \cref{sec:mainresults}, we state the main theorem and some corollaries.
In \cref{sec:proofofmainresults},
for a multiplicity $m=(m_1,m_2,m_3,m_3)$, we give a proof of main results by using double induction 
on $m_1$ and $m_2$.
In \cref{sec:application}, 
as an application of main results,
we obtain yet another expression of the lower derivations of bases for derivation modules for the Coxeter multiarrangements of type $A_2$
for a special case.


\begin{ackn}
The authors are grateful to Takuro Abe 
for many comments on lemmas in this article. 
\end{ackn} 



\section{Notation}
\label{sec:notation}
In this section, we give some definitions and theorems about 
multiarrangements on a 2-dimensional vector space,
especially for the Coxeter multiarrangements of type $B_2$.

Before introducing the Coxeter arrangement of type $B_2$, 
let us fix a polynomial ring $S$ and the module of derivations $\Der(S)$.
Let $\KK$ be a field of characteristic zero, and $S=\KK [x,y]$.
We define $\Der(S)$ by $\Der(S)=\Set{f\der_{x}+g\der_{y}|f,g \in S}$,
where $\der_{x}$ and $\der _{y}$ are partial derivative operators for $x$ and $y$, respectively.
For $\theta = f\der_{x}+g\der_{y} \in \Der(S)$,
we say that $\theta$ is a homogeneous element of degree $d$ 
if $f$ and $g$ are homogeneous polynomials of degree $d$.

As the following definition, we define the Coxeter arrangement of type $B_2$.

\begin{definition}
We define $\alpha _1 = x$, $\alpha _2 = y$, $\alpha _3 = x-y$, $\alpha _4 = x+y$, and
$\AAA = \Set{ \Ker(\alpha_i)\subset \KK^2 | i=1,2,3,4 }$.
We call $\AAA$ \defit{the Coxeter arrangement of type $B_2$}.
\end{definition}
In this article, we consider the Coxeter arrangement $\AAA$ of type $B_2$ with defining polynomials 
$\alpha_1$,$\alpha_2$,$\alpha_3$,$\alpha_4$.

\begin{definition}
 \label{def:D(A)}
For a multiplicity $m=(m_1,m_2,m_3,m_4)$, we define
$D(m)$ to be $D(\AAA,m) 
= \Set{\theta \in \Der (S) |\forall i, \theta(\alpha _{i})\in \alpha _{i}^{m_{i}}S}$. 
Moreover, 
let $(\theta _1, \theta _2)$ be a homogeneous basis for $D(m)$ 
such that the degree of $\theta _i$ is $e_i$. Then, 
we define $\exp(m)$ to be $\exp(\AAA,m)=(e_1,e_2)$.
\end{definition}

We recall Saito's criterion in this case.
\begin{theorem}[Saito's criterion]
 \label{thm:Saito}
For a multiplicity $m=(m_1,m_2,m_3,m_4)$, 
let $\theta_1,\theta_2$ be homogeneous derivations in $D(m)$ 
such that the degree of $\theta_i$ is $e_i$. Then,
$(\theta_1,\theta_2)$ is a basis for $D(m)$ if and only if
they are independent over $S$ and $|m|=m_1+m_2+m_3+m_4=e_1+e_2$. 
\end{theorem}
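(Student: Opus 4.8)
The plan is to reduce the statement to a computation with the coefficient determinant. Writing $\theta_i = f_i\der_x + g_i\der_y$ with $f_i,g_i$ homogeneous of degree $e_i$, I form the $2\times 2$ matrix $M$ with rows $(f_i,g_i)$ and set $Q=\alpha_1^{m_1}\alpha_2^{m_2}\alpha_3^{m_3}\alpha_4^{m_4}$, a homogeneous polynomial of degree $|m|$. The key lemma, which I would prove first, is that $Q$ divides $\det M$ for every pair $\theta_1,\theta_2\in D(m)$. To see this, fix $i$ and pick a linear form $\beta$ so that $\alpha_i,\beta$ form a coordinate system; since $\theta_j(\alpha_i)\in\alpha_i^{m_i}S$, the column of the coefficient matrix in these coordinates recording the $\alpha_i$-components of $\theta_1,\theta_2$ is divisible by $\alpha_i^{m_i}$, so $\alpha_i^{m_i}\mid\det M$ up to the nonzero constant Jacobian of the coordinate change. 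As $\alpha_1,\dots,\alpha_4$ are pairwise non-proportional and $S$ is a UFD, the powers $\alpha_i^{m_i}$ are pairwise coprime and their product $Q$ divides $\det M$.

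For the direction ($\Leftarrow$), assume $\theta_1,\theta_2$ are independent over $S$ and $e_1+e_2=|m|$. Independence gives $\det M\neq 0$, and since $\det M$ is homogeneous of degree $e_1+e_2=|m|=\deg Q$ and divisible by $Q$, we get $\det M=cQ$ for some nonzero $c\in\KK$. To see the pair generates $D(m)$, take any $\theta\in D(m)$ and solve $\theta=a_1\theta_1+a_2\theta_2$ over the fraction field by Cramer's rule: each $a_j$ is a ratio $\det M_j/\det M$, where $M_j$ is the coefficient matrix of a pair lying in $D(m)$ (one entry being $\theta$). By the key lemma $Q\mid\det M_j$, so $a_j=\det M_j/(cQ)\in S$. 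Hence $\theta_1,\theta_2$ span $D(m)$ and, being independent, form a basis.

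The direction ($\Rightarrow$) is where the main work lies. Assume $(\theta_1,\theta_2)$ is a basis; independence is immediate, and it remains to show $e_1+e_2=|m|$, equivalently that $\det M$ equals $cQ$ rather than a proper multiple of $Q$. I would establish this by localizing at each height-one prime. At $\mathfrak{p}=(\alpha_i)$ every $\alpha_j$ with $j\neq i$ becomes a unit, so the defining condition collapses to $\theta(\alpha_i)\in\alpha_i^{m_i}S_{\mathfrak{p}}$; choosing $u=\alpha_i$ and a complementary coordinate $v$, the localized module is free on $u^{m_i}\der_u,\der_v$, whose determinant has $\mathfrak{p}$-valuation exactly $m_i$. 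Since any global basis remains a local basis, $\det M$ has $(\alpha_i)$-valuation exactly $m_i$; at every other height-one prime the module localizes to all of $\Der(S)$, forcing valuation $0$. Therefore $\det M=cQ$ with $c\in\KK^\times$, and comparing degrees yields $e_1+e_2=|m|$.

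The main obstacle is precisely this last step: upgrading the divisibility $Q\mid\det M$ to the exact equality $\det M=cQ$ for a basis. The localization argument handles it cleanly, invoking the freeness of $D(m)$ in the two-dimensional case (recorded in the introduction via Ziegler) only insofar as a basis stays a basis after localization; all remaining steps are routine linear algebra over $S$.
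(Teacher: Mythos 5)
Your proof is correct, but note that the paper itself offers no proof of this statement: it is recalled as a known result, namely Ziegler's multiarrangement version of Saito's criterion \cite{MR1000610} (see also \cite{MR1217488,MR0586450}), so there is no internal argument to compare against. Your write-up is a valid self-contained proof along the standard Saito--Ziegler lines. The key divisibility lemma (each $\alpha_i^{m_i}$ divides $\det M$ after a linear change of coordinates making $\alpha_i$ a variable, then coprimality of the pairwise non-proportional linear forms in the UFD $S$) is exactly the classical mechanism, and the Cramer's-rule argument for sufficiency is the usual one; both are sound, including the degree count $\deg\det M=e_1+e_2=|m|=\deg Q$ forcing $\det M=cQ$ with $c\in\KK^\times$. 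For necessity, your localization at height-one primes is a clean choice: since $S=\KK[x,y]$ has only principal height-one primes, the valuation of $\det M$ at $(\alpha_i)$ is exactly $m_i$ (the localized module is visibly free on $u^{m_i}\der_u,\der_v$, and a global basis stays a local basis, so determinants agree up to a local unit) and vanishes at all other primes, giving $\det M=cQ$ on the nose; many textbook treatments instead deduce this from the divisibility lemma applied to a comparison between the given pair and a putative basis, so your route is a mild but legitimate variant. Two points you correctly handle but should keep explicit in a final write-up: the hypothesis that $(\theta_1,\theta_2)$ is a basis already yields $\det M\neq 0$ because $Q\Der(S)\subseteq D(m)\subseteq\Der(S)$ forces $D(m)$ to have rank two; and the identification of the localized module uses that localization, being exact, commutes with the finite intersection of the submodules $\Set{\theta|\theta(\alpha_i)\in\alpha_i^{m_i}S}$. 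Neither is a gap, only a sentence each of routine verification.
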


We use the following lemma to check the independency of derivations.

\begin{lemma}
 \label{lem:divide}
Let $m$ be a multiplicity with
$\exp(m)=(e_{1},e_{2})$, $e_1\leq e_2$ 
and let $\theta_{1},\theta_{2}$ be homogeneous derivations in $D(m)$ such that 
the degree of $\theta_{i}$ is $e_{i}$. Then,
$(\theta_{1},\theta_{2})$ is a basis for $D(m)$ 
if and only if $\theta_{2}$ is not equal to $f\theta_{1}$ for any $f\in S_{e_2-e_1}$.
\end{lemma}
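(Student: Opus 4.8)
The plan is to first reduce the claim to a statement about $S$-linear independence using \cref{thm:Saito}, and then to resolve that statement by expanding $\theta_1,\theta_2$ in a homogeneous basis of $D(m)$. Since $\exp(m)=(e_1,e_2)$, the sum of exponents equals $|m|$, so $e_1+e_2=|m|$ holds automatically; hence by \cref{thm:Saito} the pair $(\theta_1,\theta_2)$ is a basis for $D(m)$ if and only if $\theta_1$ and $\theta_2$ are independent over $S$. The forward implication of the lemma is then immediate: if $\theta_2=f\theta_1$ for some $f$, the two derivations are dependent, hence not a basis. The whole content lies in the converse, for which I argue the contrapositive: assuming $(\theta_1,\theta_2)$ is not a basis (equivalently, $\theta_1,\theta_2$ are $S$-dependent), I must produce $f\in S_{e_2-e_1}$ with $\theta_2=f\theta_1$. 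Throughout I assume $\theta_1\neq 0$, as in every application a lowest-degree basis candidate is nonzero.

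Because $\exp(m)=(e_1,e_2)$, I fix a homogeneous $S$-basis $(\eta_1,\eta_2)$ of $D(m)$ with $\deg\eta_i=e_i$, and write $\theta_1=p_1\eta_1+q_1\eta_2$ and $\theta_2=p_2\eta_1+q_2\eta_2$ with $p_i,q_i\in S$. Comparing degrees, since each $\theta_i$ and $\eta_i$ is homogeneous of degree $e_i$, forces $p_1,q_2\in S_0=\KK$, $p_2\in S_{e_2-e_1}$, and $q_1\in S_{e_1-e_2}$; in particular $q_1=0$ when $e_1<e_2$. By the standard criterion for free modules, the pair $(\theta_1,\theta_2)$ is a basis for $D(m)$ if and only if the transition matrix $M=\begin{pmatrix} p_1 & q_1 \\ p_2 & q_2 \end{pmatrix}$ is invertible over $S$, that is, $\det M=p_1q_2-q_1p_2\in S^\times=\KK^\times$. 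Note that $\det M$ is indeed a scalar, consistent with the reduction above.

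It remains to check, case by case, that failure of this determinant condition is equivalent to $\theta_2=f\theta_1$ for some $f\in S_{e_2-e_1}$. Suppose first $e_1<e_2$, so $q_1=0$ and $\theta_1=p_1\eta_1$; since $\theta_1\neq 0$ we have $p_1\in\KK^\times$, and $\det M=p_1q_2$. Thus $(\theta_1,\theta_2)$ fails to be a basis exactly when $q_2=0$, i.e. $\theta_2=p_2\eta_1$; in that situation $f:=p_2/p_1\in S_{e_2-e_1}$ satisfies $\theta_2=f\theta_1$, while conversely any relation $\theta_2=f\theta_1=fp_1\eta_1$ has vanishing $\eta_2$-component, forcing $q_2=0$. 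In the remaining case $e_1=e_2$, all coefficients $p_i,q_i$ and the scalar $f$ lie in $\KK$, and the equivalence reduces to the elementary fact that two vectors with $\theta_1\neq 0$ are $\KK$-dependent if and only if $\theta_2$ is a scalar multiple of $\theta_1$. This completes the contrapositive and hence the proof.

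The step I expect to be the crux is guaranteeing that the proportionality factor $f$ is an honest homogeneous polynomial of degree exactly $e_2-e_1$, rather than merely a rational function. Testing dependence through the coefficient determinant $a_1b_2-a_2b_1$ of $\theta_i=a_i\der_x+b_i\der_y$ only yields proportionality over the fraction field $\KK(x,y)$: there exist $S$-dependent pairs of the correct degrees, such as $x\eta$ and $y\eta$ for a derivation $\eta$, that are not polynomial multiples of one another. The hypothesis $\exp(m)=(e_1,e_2)$ is precisely what excludes such pairs, since expanding in the homogeneous basis $(\eta_1,\eta_2)$ renders the transition matrix triangular with scalar pivot $p_1\in\KK^\times$ (when $e_1<e_2$), making $f=p_2/p_1$ automatically polynomial. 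Keeping the degenerate cases $\theta_1=0$ and $e_1=e_2$ straight is the remaining bit of bookkeeping.
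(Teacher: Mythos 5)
Your proof is correct, and while it rests on the same basic mechanism as the paper's (expand in a homogeneous basis of $D(m)$ and let degree bookkeeping force the relevant coefficients to be scalars), it takes a genuinely different and in one respect more careful route. The paper proves the nontrivial direction by asserting, directly from $\exp(m)=(e_1,e_2)$, the existence of a homogeneous $\theta_2'$ of degree $e_2$ with $D(m)=\langle\theta_1,\theta_2'\rangle_S$ --- that is, it takes for granted that the given $\theta_1$ extends to a homogeneous basis --- and then writes $\theta_2=g\theta_1+s\theta_2'$ with $g\in S_{e_2-e_1}$, $s\in\KK$, notes $s\neq0$, and solves for $\theta_2'$. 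You instead expand \emph{both} $\theta_1$ and $\theta_2$ in an arbitrary homogeneous basis $(\eta_1,\eta_2)$ and apply the transition-matrix criterion $\det M\in S^{\times}=\KK^{\times}$; the observation that degree reasons make $M$ triangular with scalar pivot $p_1\in\KK^{\times}$ when $e_1<e_2$ is precisely a proof of the extension step the paper leaves unjustified, so your argument is slightly longer but self-contained where the paper's has a standard-but-unproved step. One point both treatments share: the statement fails for $\theta_1=0$, $\theta_2\neq0$ (then $\theta_2\neq f\theta_1$ for every $f$, yet $(0,\theta_2)$ is not a basis), so ``homogeneous of degree $e_1$'' must be read as entailing $\theta_1\neq0$; you flag this explicitly, while the paper needs it implicitly (its basis $(\theta_1,\theta_2')$ cannot exist if $\theta_1=0$). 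Your opening appeal to Saito's criterion is harmless but redundant, since the determinant criterion you set up already decides both directions, including the ``sufficiency'' the paper dismisses as clear.
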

\begin{proof}
Sufficiency is clear.
Assume that $\theta_{2}$ is not equal to $f\theta_{1}$ for any $f\in S_{e_2-e_1}$. 
Since $\exp(m)=(e_1,e_2)$, 
there exists a homogeneous derivation $\theta_{2}' \in D(m)$ such that 
$D(m)=\langle \theta_{1},\theta_{2}' \rangle_{S}$ and the degree of $\theta_{2}'$ is $e_{2}$.
Since
$\theta_{2} \in D(m)$ and the degree of $\theta_2$ is $e_2$, we have
$\theta_{2}=g\theta_{1}+s \theta_{2}'$
with $g \in S_{e_{2}-e_{1}}$ and $s \in \mathbb{K}$.
Moreover, since $\theta_{2}$ is not equal to $g\theta_{1}$ by assumption, 
we have $s$ is not equal to zero.
Hence it follows that $\theta_{2}'=-\frac{g}{s}\theta_{1}+\frac{1}{s}\theta_{2}$, which implies that 
$D(m)=\langle \theta_{1},\theta_{2} \rangle_{S}$.
\end{proof}

A multiplicity is said to be balanced if each element of 
the multiplicity is less than the summation of the other elements.

\begin{definition}
 \label{def:multiplicity}
For a multiplicity $m=(m_1,m_2,m_3,m_4)$, 
we say that $m$ is \defit{balanced} if
$2m_i \leq |m|-1$ for all $i$.
\end{definition}

For a multiplicity $m$ which is not balanced, 
we know the exponents $\exp(m)$ as follows.

\begin{thm}
Let $m=(m_1,m_2,m_3,m_4)$ satisfy $m_k=\max \set{m_1,m_2,m_3,m_4}$.
If $m$ is not balanced, then
$\exp(m)=(|m|-m_k,m_k)$.
\end{thm}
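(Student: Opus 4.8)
The plan is to construct an explicit homogeneous derivation of degree $m_k$ lying in $D(m)$ and then apply Saito's criterion (\cref{thm:Saito}). Without loss of generality I may assume $m_1=m_k=\max\Set{m_1,m_2,m_3,m_4}$, since the four defining forms $\alpha_1=x$, $\alpha_2=y$, $\alpha_3=x-y$, $\alpha_4=x+y$ are permuted by the symmetries of the arrangement; relabelling reduces every case to this one. The hypothesis that $m$ is \emph{not} balanced means $2m_1\geq|m|$, equivalently $m_1\geq m_2+m_3+m_4$. The claim is then that $\exp(m)=(|m|-m_1,m_1)$.

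First I would exhibit a derivation of the smaller degree $|m|-m_1=m_2+m_3+m_4$. The natural candidate is
\begin{equation*}
\theta_1 = \alpha_2^{m_2}\alpha_3^{m_3}\alpha_4^{m_4}\,\der_{x},
\end{equation*}
which is homogeneous of degree $m_2+m_3+m_4$. I must check $\theta_1\in D(m)$, i.e. $\theta_1(\alpha_i)\in\alpha_i^{m_i}S$ for each $i$. For $i=2,3,4$ this is immediate because $\alpha_i^{m_i}$ already divides the coefficient of $\theta_1$, so $\theta_1(\alpha_i)$ is a multiple of $\alpha_i^{m_i}$ regardless of the value of $\der_x\alpha_i$. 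For $i=1$ we have $\theta_1(\alpha_1)=\theta_1(x)=\alpha_2^{m_2}\alpha_3^{m_3}\alpha_4^{m_4}$, and we need this to lie in $\alpha_1^{m_1}S=x^{m_1}S$; but $\alpha_2,\alpha_3,\alpha_4$ are each coprime to $\alpha_1=x$, so this fails unless $m_1=0$. Hence the single monomial $\der_x$ is not correct, and the right construction must be a derivation $\theta_1$ whose action on each $\alpha_i$ is divisible by $\alpha_i^{m_i}$ simultaneously; I would instead take $\theta_1$ to be the lower element of the known basis for the \emph{unbalanced} rank-one–dominated situation, built so that the big multiplicity $m_1$ is absorbed by the second basis element rather than the first.

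More concretely, the correct pair is obtained by letting $\theta_2$ carry the large exponent: set $\theta_2=\alpha_1^{m_1}\,\eta$ for a suitable degree-zero (i.e. constant-coefficient) or low-degree correction $\eta$, guaranteeing $\theta_2(\alpha_1)\in\alpha_1^{m_1}S$ automatically, while designing $\theta_1$ of degree $|m|-m_1$ to handle the remaining three hyperplanes. Because $|m|-m_1=m_2+m_3+m_4\le m_1$ by the unbalanced hypothesis, the degrees are ordered $e_1\le e_2$ as required, and their sum is exactly $(|m|-m_1)+m_1=|m|$. Once both $\theta_1,\theta_2\in D(m)$ are verified by the divisibility check above, Saito's criterion reduces the whole theorem to showing linear independence over $S$, for which I would invoke \cref{lem:divide}: it suffices to check that $\theta_2\ne f\theta_1$ for every $f\in S_{e_2-e_1}$.

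The main obstacle I anticipate is not the degree bookkeeping but pinning down the explicit correction term that makes a degree-$(m_2+m_3+m_4)$ derivation simultaneously satisfy all four divisibility conditions; the monomial guess fails at the dominant hyperplane, so the construction genuinely uses the unbalanced inequality $m_1\ge m_2+m_3+m_4$ to trade degree against divisibility. Once that derivation is in hand, verifying membership in $D(m)$ is a routine coprimality computation, and the independence via \cref{lem:divide} should follow by a short degree or leading-term argument (e.g. comparing the $x$- and $y$-components, since $\theta_1$ and $\theta_2$ will have visibly different supports). I would therefore spend the bulk of the effort making the lower-degree generator explicit and checking the four conditions, and treat the independence and the final appeal to Saito's criterion as the concluding, lighter steps.
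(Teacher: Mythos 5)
There is a genuine gap, and it sits exactly where you stopped: you never produce the explicit lower-degree derivation, which is the crux of the whole theorem. Your candidate $\alpha_2^{m_2}\alpha_3^{m_3}\alpha_4^{m_4}\,\der_x$ fails at the dominant hyperplane, as you correctly observed, but the remedy is not a mysterious ``correction term'' --- it is simply to differentiate in the direction that annihilates the dominant form. With $m_1=\max$ and $\alpha_1=x$, take $\theta_1=\alpha_2^{m_2}\alpha_3^{m_3}\alpha_4^{m_4}\,\der_y$: then $\theta_1(\alpha_1)=\theta_1(x)=0\in x^{m_1}S$ trivially, while $\theta_1(\alpha_i)=\pm\,\alpha_2^{m_2}\alpha_3^{m_3}\alpha_4^{m_4}\in\alpha_i^{m_i}S$ for $i=2,3,4$ by precisely the divisibility reasoning you already wrote out. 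This is the derivation the paper uses (there with $m_2=\max$ and $\der_x$), and no use of the unbalanced inequality is needed to build it --- you stopped one step short of the construction.

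Your fallback strategy is also not viable as sketched, in two ways. First, the proposed second element $\theta_2=\alpha_1^{m_1}\eta$ with $\eta$ of degree zero provably fails: $y^{m_2}\mid\theta_2(y)$ forces $\eta=a\,\der_x$, and then $(x-y)^{m_3}\mid a\,x^{m_1}$ fails whenever $m_3\geq 1$; in fact no easy explicit top-degree element exists, and the paper never constructs one. Instead, the paper proves a vanishing statement: any homogeneous $\psi=f\der_x+g\der_y\in D(m)$ of degree $n<|m|-m_k$ is zero, because the dominant divisibility condition kills $g$ (its degree is below $m_k$), and then $f$ must be divisible by the pairwise-coprime product of the other three forms, which exceeds $n$ in degree. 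Combined with freeness in dimension two (Ziegler) and the degree count $e_1+e_2=|m|$, this pins $\exp(m)=(|m|-m_k,m_k)$ with only the \emph{one} explicit derivation $\theta_1$. Second, your appeal to \cref{lem:divide} is circular: that lemma presupposes $\exp(m)=(e_1,e_2)$ is already known, which is the conclusion you are trying to establish. The unbalanced hypothesis enters the paper's argument only through the inequality $|m|-m_k\leq m_k$ used in the vanishing step, not through any delicate trade of degree against divisibility.
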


\begin{proof}
Without loss of generality, we can assume that
$m_2=\max \set{m_1,m_2,m_3,m_4}$.

Let $\psi=f\der_{x}+g\der_{y}\in D(m)$ be a  homogeneous  element of degree $n$.
Assume that  $n<m_1+m_3+m_4 \leq m_2$.
Since $y^{m_2}$ divides $\psi(y)=g$ and $n<m_2$, we have $g=0$ and $\psi=f\der_x$.
Hence $\psi(\alpha_i)=f$ for $i=1,3,4$.
Since $\alpha_i^{m_i}$ divides $\psi(\alpha_i)=f$ for $i=1,3,4$, 
it follows that 
$\alpha_1^{m_1}\alpha_3^{m_3}\alpha_4^{m_4}$ divides $f$. 
Since $n<m_1+m_3+m_4$, we have $f=0$ and $\psi=0$.
Hence $D(m)$ has no nonzero element whose degree is less than $m_1+m_3+m_4=|m|-m_2$.

Let $\theta=\alpha_1^{m_1}\alpha_3^{m_3}\alpha_4^{m_4}\der_{x}$.
It is clear that  
$\theta$ is in $D(m)$, and 
that the degree is $m_1+m_3+m_4=|m|-m_2$.
Hence $\theta$ has the lowest degree in $D(m)$ and
$\exp(m)=(|m|-m_2,m_2)$ by \Cref{thm:Saito}.
\end{proof}

\section{Main Results}
\label{sec:mainresults}
In this section, we state the main theorem and some corollaries.

Before statements, let us introduce the double Pochhammer symbol, which can be considered as 
an extension of the double factorial.
For $a\in \RR$ and $n\in \NN$,
we define the \defit{double Pochhammer symbol} $\dps{a}{n}$
by
\begin{align*}
  \dps{a}{n}=
  \begin{cases}
    1 & (n=0)\\
    (a+2(n-1))\cdot \dps{a}{n-1} &(n>0).
  \end{cases}
\end{align*}
For $n\in\ZZ_{>0}$,
\begin{align*}
  \dps{a}{n}&=\prod_{i=0}^{n-1} (a+2i)=a(a+2)\cdots (a+2(n-1)),\\
  \dps{1}{n}&=(2n-1)!!,\\
  \dps{2}{n}&=(2n)!!,
\end{align*}
where $m!!$ stands for the double factorial of $m$.

\begin{definition}
 \label{def:main}
Let $m=(m_1,m_2,m_3,m_3)$ satisfy
$m_1$,$m_2\in 2\ZZ+1$, $|m|\in 4\ZZ$, and let $d=\frac{|m|}{2}-1$.
We define $f_m,g_m,\theta_m$ by
\begin{align*}
f_{m}&=
\sum_{i=0}^{\frac{d-m_{1}}{2}} (-1)^i 
\frac{
\dps{m_{1}+m_{2}-d}
{\frac{d-m_{1}}{2}-i}
}
{(d-m_{1}-2i)!!(d-2i)!!(2i)!!}
x^{d-2i}y^{2i}\\
&=
x^{m_1}\cdot
\sum_{i=0}^{\frac{d-m_{1}}{2}} (-1)^i 
\frac{
\dps{m_{1}+m_{2}-d}
{\frac{d-m_{1}}{2}-i}
}
{(d-m_{1}-2i)!!(d-2i)!!(2i)!!}
x^{d-m_1-2i}y^{2i},\\
g_{m}&=
\sum_{i=0}^{\frac{d-m_{2}}{2}} (-1)^{i+m_{3}} 
\frac{
\dps{m_{1}+m_{2}-d}
{\frac{d-m_{2}}{2}-i}
}
{(d-m_{2}-2i)!!(d-2i)!!(2i)!!}
x^{2i}y^{d-2i}\\
&=
y^{m_2}\cdot
\sum_{i=0}^{\frac{d-m_{2}}{2}} (-1)^{i+m_{3}} 
\frac{
\dps{m_{1}+m_{2}-d}
{\frac{d-m_{2}}{2}-i}
}
{(d-m_{2}-2i)!!(d-2i)!!(2i)!!}
x^{2i}y^{d-m_2-2i},\\
\theta_{m}&=f_{m}\der_{x} -g_{m}\der_{y}.
\end{align*}
If $\frac{d-m_1}{2}<0$ and $\frac{d-m_2}{2}<0$, 
then we define $f_m$ and $g_m$ to be zero, respectively.
\end{definition}

\begin{remark}
Since $|m|\in 4\ZZ \subset 2\ZZ$, 
$\frac{d-m_i}{2}=\frac{1}{2}((\frac{|m|}{2}-1)-m_i) \geq 0$ 
if and only if  $2m_i \leq |m|-1$.
Therefore by \Cref{def:multiplicity}, 
if $m$ is balanced, then $\frac{d-m_i}{2}\geq0$.
\end{remark}

The following is the main theorem of this article.
\begin{theorem}[Main theorem]
  \label{thm:main}
Let $m=(m_1,m_2,m_3,m_3)$ satisfy
$m_1$,$m_2\in 2\ZZ+1$, $|m|\in 4\ZZ$.
If $m$ satisfies $2m_i \leq |m|+2$ for all $i$, then
$\theta_{m}\in D(m)$,
and $\exp(m)=(\frac{|m|}{2}-1,\frac{|m|}{2}+1)$.
Moreover, if $m$ is balanced, then
$D(m)=\langle \theta_{m}, \theta_{m''} \rangle_{S}$,
where $m''=(m_1+2,m_2+2,m_3,m_3)$.
\end{theorem}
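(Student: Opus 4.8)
The plan is to apply Saito's criterion (\Cref{thm:Saito}). Write $d=\frac{|m|}{2}-1$ and $m''=(m_1+2,m_2+2,m_3,m_3)$, so that $|m''|=|m|+4$ and $\theta_m,\theta_{m''}$ are homogeneous of degrees $d$ and $d+2=\frac{|m''|}{2}-1$, respectively. Since $d+(d+2)=2d+2=|m|$, \Cref{thm:Saito} shows that once we establish (i) $\theta_m\in D(m)$, (ii) $\theta_{m''}\in D(m)$, and (iii) $\theta_m,\theta_{m''}$ are independent over $S$, then $(\theta_m,\theta_{m''})$ is a basis for $D(m)$ and $\exp(m)=(\frac{|m|}{2}-1,\frac{|m|}{2}+1)$. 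This proves the ``moreover'' part and, simultaneously, the first part in the balanced case; the first part for non-balanced $m$ will be treated at the end.

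For (i) I would verify the four conditions $\theta_m(\alpha_i)\in\alpha_i^{m_i}S$ directly. Those for $\alpha_1=x$ and $\alpha_2=y$ are immediate, since $\theta_m(x)=f_m$ is divisible by $x^{m_1}$ and $\theta_m(y)=-g_m$ by $y^{m_2}$, as one reads off the factored forms in \Cref{def:main}. For $\alpha_3=x-y$ and $\alpha_4=x+y$ one computes $\theta_m(\alpha_3)=f_m+g_m$ and $\theta_m(\alpha_4)=f_m-g_m$. Because $f_m$ involves only even powers of $y$ and $g_m$ only odd powers of $y$ (recall $d$ is odd, as $\frac{|m|}{2}$ is even), the substitution $y\mapsto -y$ sends $f_m+g_m\mapsto f_m-g_m$ and interchanges $\alpha_3\leftrightarrow\alpha_4$; hence the two conditions are equivalent and it suffices to prove the single divisibility $(x-y)^{m_3}\mid(f_m+g_m)$. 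For (ii), observe that whenever $m$ satisfies $2m_i\leq|m|+2$ so does $m''$, so the membership result (i) applied to $m''$ gives $\theta_{m''}\in D(m'')$; since $m_1''\geq m_1$, $m_2''\geq m_2$ and $m_3''=m_3$, we have $D(m'')\subseteq D(m)$, whence $\theta_{m''}\in D(m)$.

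For (iii) I would exploit that $x^{m_1}$ divides $f_m$ exactly and $y^{m_2}$ divides $g_m$ exactly (the terms $x^{m_1}y^{d-m_1}$ of $f_m$ and $x^{d-m_2}y^{m_2}$ of $g_m$ carry the factor $\dps{m_1+m_2-d}{0}=1$ and hence nonzero coefficients). Suppose $\theta_{m''}=f\,\theta_m$ with $f\in S_2$. Comparing $\der_x$-components yields $f_{m''}=f\,f_m$; taking $x$-adic valuations and using that $x^{m_1}$ (resp.\ $x^{m_1+2}$) divides $f_m$ (resp.\ $f_{m''}$) exactly forces $f=cx^2$ for some constant $c$. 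Comparing $\der_y$-components then gives $g_{m''}=cx^2g_m$, but $y^{m_2+2}$ divides $g_{m''}$ exactly whereas $cx^2g_m$ has $y$-adic valuation $m_2$, a contradiction. Thus $\theta_{m''}$ is not an $S_2$-multiple of $\theta_m$, and \Cref{lem:divide} completes (iii).

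The main obstacle is the divisibility $(x-y)^{m_3}\mid(f_m+g_m)$ isolated in (i): it is a nontrivial identity among the double Pochhammer coefficients asserting that the degree-$d$ form $f_m+g_m$ vanishes to order $m_3$ along $x=y$. This is where I would run the double induction on $m_1,m_2$ announced in the introduction, reducing $(m_1,m_2)$ toward base cases and matching the emerging recurrences with $\dps{a}{n}=(a+2(n-1))\dps{a}{n-1}$. Finally, the hypothesis $2m_i\leq|m|+2$ (rather than ``balanced'') is needed only to absorb the extremal non-balanced case, in which necessarily $m_1=d+2$ or $m_2=d+2$ (note $m_3\leq d$ always, since $|m|>2m_3$); there one of $f_m,g_m$ vanishes, so (iii) fails and $\theta_m,\theta_{m''}$ become $S$-dependent, but the exponent claim instead follows from the theorem on non-balanced multiplicities in \Cref{sec:notation}, and membership (i) collapses to the single identity $g_m=c\,y^{m_2}(x^2-y^2)^{m_3}$ (a degree count shows both sides have degree $m_2+2m_3=d$).
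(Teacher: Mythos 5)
Your overall frame is the same as the paper's: Saito's criterion with degrees $d=\frac{|m|}{2}-1$ and $d+2$, membership of $\theta_{m''}$ via $D(m'')\subseteq D(m)$, and exact-valuation bookkeeping for independence; your reduction of membership to the single divisibility $(x-y)^{m_3}\mid (f_m+g_m)$ via the $y\mapsto -y$ parity symmetry is a nice (correct) observation, and your analysis of the non-balanced boundary case is consistent with \Cref{lem:F}. The genuine gap is that you never prove the divisibility you isolate, and that divisibility is the entire content of the paper's Section~4. Saying you ``would run the double induction announced in the introduction'' restates a plan, not an argument: what the paper actually does is discover and verify two explicit three-term recurrences with precisely tuned scalars, namely \Cref{lem:B}, $e\theta_{m''}=b_0\theta_{m'}-d_0\alpha_3\alpha_4\theta_m$ relating $(1,m_2,m_3,m_3)$, $(1,m_2,m_3+2,m_3+2)$, $(1,m_2+2,m_3+1,m_3+1)$, and \Cref{lem:D}, $e\theta_{m''}=b_0\theta_{m'}-d_0\alpha_2^2\theta_m$ relating $(m_1,m_2,m_3,m_3)$, $(m_1,m_2+4,m_3,m_3)$, $(m_1+2,m_2+2,m_3,m_3)$, together with the base cases \Cref{lem:A} (the multiplicity $(1,1,d,d)$, where $f_m\pm g_m$ collapse to $c(x\pm y)^d$) and \Cref{lem:F}. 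Note in particular that the first induction, on $m_2$ at $m_1=1$, cannot hold $m_3$ fixed --- the third coordinate moves as well --- so ``double induction on $m_1,m_2$'' does not even describe a workable reduction path until one finds triples of multiplicities whose $\theta$'s satisfy a linear relation over $S$; none of this appears in your proposal. Similarly, your boundary identity $g_m=c\,y^{m_2}(x^2-y^2)^{m_3}$ is supported only by a degree count; equality of degrees does not give equality of polynomials, and the paper proves it by resumming the double-Pochhammer coefficients into binomials via $(2k)!!=2^k\,k!$.

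There is also a circularity in your step (iii): \Cref{lem:divide} has the hypothesis $\exp(m)=(e_1,e_2)$, i.e.\ it can only be applied once the exponents are already known, which is exactly what you are trying to establish. The fix is cheap and uses precisely the facts you isolated: suppose $p\theta_m+q\theta_{m''}=0$ with $p,q$ not both zero; comparing $\der_x$- and $\der_y$-components gives $f_mg_{m''}=f_{m''}g_m$, while your exact top-coefficient observations yield $\deg_x(f_mg_{m''})=2d-m_2$ and $\deg_x(f_{m''}g_m)=2d-m_2+2$ (the relevant leading coefficients are nonzero because $m_1+m_2-d$ is odd, so the double Pochhammer symbols never vanish), a contradiction. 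This determinant argument is exactly the paper's \Cref{lem:M} and gives $S$-independence directly, with no appeal to \Cref{lem:divide}. With that repair, and granting the membership lemma, your assembly of the basis, the exponents, and the non-balanced exponent claim via the unnumbered theorem of \Cref{sec:notation} is sound; but as submitted, the proposal proves the easy shell of the theorem and leaves its computational core unproved.
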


We have some corollaries to \Cref{thm:main}. 
\begin{cor}
 \label{cor:Abe}
Let $m=(m_1,m_2,m_3,m_4)$ be a balanced multiplicity that satisfies $|m_{1}-m_{2}| \geq |m_{3}-m_{4}|$.
Regarding the difference of exponents, we have the following: 
\begin{enumerate}
\item
If $|m|$ is even and the following condition are satisfied,
then
the difference of exponents is two:
\begin{enumerate}
\item \label{cond:1}
  \label{cond:1a}
  $m_{1},m_{2}\in2\ZZ+1$,
  $|m|\in 4\ZZ$,
  $m_{3}=m_{4}$.
\end{enumerate}
\item \label{cond:2}
If $|m|$ is even and one of the following are satisfied,
then the difference of exponents is zero:
\begin{enumerate}
\item
  Condition \ref{cond:1a} are not satisfied and
  $m_{3}=m_{4}$.
\item \label{cond:2b}
  $m_{3}=m_{4}-1$. 
\item
  $m_{1},m_{2}\in2\ZZ+1$ and  $m_{3}=m_{4}-2$.
\end{enumerate}
\item \label{cond:3}
If $|m|$ is odd and one of the following are satisfied,
then the difference of exponents is one:
\begin{enumerate}
\item \label{cond:3a}
  $m_{3}=m_{4}$.
\item
  $m_{3}=m_{4}-1$. 
\item
  $m_{3}=m_{4}-2$. 
\item
  $m_{1},m_{2}\in2\ZZ+1$ and  $m_{3}=m_{4}-3$.
\end{enumerate}
\end{enumerate}
\end{cor}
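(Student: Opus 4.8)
The plan is to reduce the whole statement to locating the least degree $e_1$ of a nonzero derivation in $D(m)$. Writing $\exp(m)=(e_1,e_2)$ with $e_1\le e_2$, Saito's criterion (\Cref{thm:Saito}) gives $e_1+e_2=|m|$, so $e_2-e_1=|m|-2e_1$ has the parity of $|m|$ and equals $2,1,0$ exactly when $e_1$ equals $\frac{|m|}{2}-1,\frac{|m|-1}{2},\frac{|m|}{2}$, respectively. The hypothesis $|m_1-m_2|\ge|m_3-m_4|$ is a harmless normalization: the $45^\circ$ rotation of $V$ permutes the four lines so as to send $(m_1,m_2,m_3,m_4)$ to $(m_3,m_4,m_2,m_1)$ while preserving $\exp(m)$, and the reflection $x\leftrightarrow y$ swaps $m_1,m_2$, so both spreads and the orders within each pair may be arranged at will. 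After this reduction the corollary is exactly the determination of $e_1$, and the three difference values are read off from the displayed identity $e_2-e_1=|m|-2e_1$.

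Determining $e_1$ amounts to finding the least degree of a nonzero solution of the conditions defining $D(m)$: for $\theta=f\der_x+g\der_y$ these are $x^{m_1}\mid f$, $y^{m_2}\mid g$, $(x-y)^{m_3}\mid(f-g)$, and $(x+y)^{m_4}\mid(f+g)$. Setting $f=x^{m_1}f'$ and $g=y^{m_2}g'$ turns the degree-$d$ part into a homogeneous linear system in the coefficients of $f',g'$ with $2d+2-m_1-m_2$ unknowns and $m_3+m_4$ conditions; hence for even $|m|$ it is square exactly at $d=\frac{|m|}{2}-1$ and overdetermined below, while for odd $|m|$ it is overdetermined at every $d\le\frac{|m|-3}{2}$. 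First I would show the system has only the trivial solution whenever $|m|-2d\ge3$; this yields $e_2-e_1\le2$, which together with the parity of $|m|$ settles every subcase of part~\ref{cond:3} at the value $1$ and reduces the even cases to a choice between $0$ and $2$. Second, for the even cases I would examine the square system at $d=\frac{|m|}{2}-1$: \Cref{thm:main} produces its one-dimensional solution space, spanned by $\theta_m$, exactly under Condition~\ref{cond:1a}, giving difference $2$ there, so what remains is to prove this square system is nonsingular in the even subcases of part~\ref{cond:2}, forcing $e_1=\frac{|m|}{2}$ and difference $0$.

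The heart of the matter, and the step I expect to be hardest, is thus the nonsingularity (or full column rank) of these structured systems outside Condition~\ref{cond:1a}. Since the four constraints are the vanishing of $f$, $g$, $f-g$, $f+g$ to prescribed orders along $x=0$, $y=0$, $x=y$, $x=-y$, the rank is governed by how the truncated Taylor data of $x^{m_1}$ and $y^{m_2}$ along the diagonals $x=\pm y$ interact, and this is precisely what the parities of $m_1,m_2$ and the residue of $|m|$ modulo $4$ control; it is the same arithmetic that underlies the closed forms of \Cref{def:main}, where substituting $y=\pm x$ collapses the diagonal conditions to alternating sums of quotients of the double Pochhammer symbols $\dps{m_1+m_2-d}{\bullet}$ by double factorials. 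Concretely I would either evaluate the relevant determinant directly after this substitution, or run the double induction on $m_1$ and $m_2$ already used for \Cref{thm:main}, propagating nonsingularity from small cases; here the perturbation principle that raising one multiplicity by one shifts exactly one exponent by one supplies the bookkeeping that keeps the induction consistent with the congruences. As a shortcut, any a priori estimate $e_2-e_1\le2$ for balanced two-dimensional multiarrangements of four lines would replace the first task outright and leave only the separation of $0$ from $2$ in the even cases to the explicit analysis above.
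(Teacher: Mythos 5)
Your skeleton --- reducing everything to the least exponent $e_1$ via \Cref{thm:Saito}, using the symmetries of the $B_2$ arrangement to dispose of the hypothesis $|m_1-m_2|\geq|m_3-m_4|$, and extracting Case \ref{cond:1a} from \Cref{thm:main} --- is sound, and Case \ref{cond:1a} is handled in the paper in exactly this way. But both load-bearing steps of your plan are unproved, and the first does not follow from the dimension count you give. Having at least as many conditions as unknowns never by itself forces the trivial solution: the rank of these systems genuinely drops. Concretely, $\alpha_1^{m_1}\alpha_3^{m_3}\alpha_4^{m_4}\der_x$ lies in $D(m)$ for \emph{every} multiplicity, in degree $|m|-m_2$, and whenever $2m_2\geq|m|+3$ this degree lies inside your claimed-trivial range $|m|-2d\geq 3$. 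So balancedness must enter the rank argument in an essential way, yet it never appears anywhere in your linear-algebra discussion; the ``shortcut'' you mention --- the a priori bound $e_2-e_1\leq 2$ for balanced multiplicities on four lines --- is precisely Abe's theorem \cite{MR3070120} quoted in the remark after \Cref{cor:Q}, i.e.\ an imported theorem rather than a routine estimate.

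The second gap is the separation of $0$ from $2$ in part \ref{cond:2}, which you correctly identify as the crux but then leave to two uncarried-out strategies. The determinant evaluation is never performed, and the double induction behind \Cref{thm:main} is tailored to multiplicities of the shape $(m_1,m_2,m_3,m_3)$ with $m_1,m_2$ odd and $|m|\in 4\ZZ$; it constructs the \emph{singular} (difference-two) case and gives no handle on nonsingularity for the patterns $m_3=m_4-1$ and $m_3=m_4-2$, which impose differently shaped condition matrices. The perturbation principle you lean on --- raising one multiplicity by one raises exactly one exponent by one --- is true but symmetric: at each step the difference may move up or down by one, and deciding the direction is exactly the content at issue, so it cannot supply the bookkeeping that closes an induction. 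The paper fills precisely this hole by a two-line argument: Case \ref{cond:1a} follows from \Cref{thm:main}, and all remaining cases are deduced from it via the multiplicity-lattice theory of \cite{MR2873095}, which, given a ``peak'' multiplicity where the difference equals two, determines the difference at all nearby multiplicities at once. To make your route self-contained you would have to prove both a balancedness-sensitive rank bound and the nonsingularity determinants for three distinct condition patterns; absent that, your argument either stalls or silently imports \cite{MR3070120} and \cite{MR2873095}, at which point it coincides with the paper's proof.
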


\begin{proof}
Case \ref{cond:1a} is the direct consequence of \Cref{thm:main}.
Case \ref{cond:2} and Case \ref{cond:3} are derived from Case \ref{cond:1a} by \cite{MR2873095}.
\end{proof}

\begin{remark}
Abe conjectured \Cref{cor:Abe}, and proved 
the case where $m_3$ is large enough 
in \cite{MR2510979}.
\end{remark}

\begin{cor}
  \label{cor:P}
Let $m=(m_1,m_2,m_3,m_3)$ be a balanced multiplicity that satisfies 
$m_1$,$m_2\in 2\ZZ+1$ and $|m|\in 4\ZZ$. 
Then, $\alpha_i$ does not divide $\theta_{m}$ for $i=1,2,3,4$.
\end{cor}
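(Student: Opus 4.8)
The plan is to argue by contradiction, using the exponents of the derivation modules of the ``one-step-smaller'' multiplicities together with \Cref{cor:Abe}. Fix $i\in\{1,2,3,4\}$ and suppose $\alpha_i$ divides $\theta_m$. Since $\theta_m=f_m\der_x-g_m\der_y$ is homogeneous of degree $d=\frac{|m|}{2}-1$, such divisibility means $\alpha_i\mid f_m$ and $\alpha_i\mid g_m$, so that $\theta_m=\alpha_i\theta'$ for a homogeneous $\theta'\in\Der(S)$ of degree $d-1$. First I would record that $\theta_m\neq 0$: the coefficient of the monomial $x^{d}$ in $f_m$ is the $i=0$ summand, whose numerator $\dps{m_1+m_2-d}{(d-m_1)/2}$ is nonzero because $m_1+m_2-d$ is odd (indeed $m_1+m_2$ is even while $d$ is odd, as $|m|\in4\ZZ$), so no factor $m_1+m_2-d+2j$ can vanish. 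Hence $\theta'\neq 0$.

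Next I would verify that $\theta'$ lies in $D(n)$, where $n$ denotes the multiplicity obtained from $m$ by lowering its $i$-th component by one. By \Cref{thm:main} we have $\theta_m\in D(m)$. For $j\neq i$ the forms $\alpha_i,\alpha_j$ are coprime, so $\alpha_i\theta'(\alpha_j)=\theta_m(\alpha_j)\in\alpha_j^{m_j}S$ forces $\theta'(\alpha_j)\in\alpha_j^{m_j}S$; and $\alpha_i\theta'(\alpha_i)=\theta_m(\alpha_i)\in\alpha_i^{m_i}S$ forces $\theta'(\alpha_i)\in\alpha_i^{m_i-1}S$. Balancedness already guarantees $m_3=m_4\ge1$ (a value $m_3=0$ would contradict $m$ being balanced), so this lowering is legitimate for every $i$. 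Thus $\theta'$ is a nonzero homogeneous element of $D(n)$ of degree $d-1$.

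The heart of the argument is to show that the smallest exponent of $D(n)$ equals $d$, which is incompatible with the existence of $\theta'$. Since $|n|=|m|-1$ is odd and, using $|m|\in4\ZZ$, each inequality $2m_j\le|m|-1$ upgrades to $2m_j\le|m|-2$, one checks readily that $n$ is balanced. I would then invoke \Cref{cor:Abe}: as $|n|$ is odd, $n$ falls into its third case, so the difference of the exponents of $D(n)$ equals one, whence those exponents are $\bigl(\frac{|n|-1}{2},\frac{|n|+1}{2}\bigr)=(d,d+1)$. In a free module with exponents $(d,d+1)$ there is no nonzero homogeneous element of degree below $d$, which contradicts $\theta'$. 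This yields $\alpha_i\nmid\theta_m$.

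The one delicate point is matching $n$ to the hypotheses of \Cref{cor:Abe}, which are phrased for a labelling with $|m_1-m_2|\ge|m_3-m_4|$. For $i\in\{1,2\}$ the perpendicular pair $\{H_3,H_4\}$ still carries equal multiplicities, so subcase (a) applies directly. For $i\in\{3,4\}$ this pair instead carries $\{m_3-1,m_3\}$; when $m_1\neq m_2$ one uses subcase (b), but when $m_1=m_2$ the required inequality fails as written, and I would first apply the order-$8$ symmetry of the $B_2$ arrangement—the $45^\circ$ rotation, which interchanges the two perpendicular pairs and leaves the exponents unchanged—to place the pair of larger internal difference on $\{H_1,H_2\}$, reducing again to subcase (a). Checking that the exponents are invariant under this Coxeter symmetry and that the parity bookkeeping for balancedness goes through is the main thing to get right; everything else is formal.
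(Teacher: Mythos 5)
Your proposal is correct and takes essentially the same route as the paper: assume $\alpha_i\mid\theta_m$, divide it out to obtain a nonzero homogeneous element of degree $\frac{|m|}{2}-2$ in $D(m^{(-1)})$, and contradict $\exp(m^{(-1)})=(\frac{|m|}{2}-1,\frac{|m|}{2})$ obtained from \Cref{cor:Abe}-\ref{cond:3}. The details you supply --- the nonvanishing of $\theta_m$, balancedness of the lowered multiplicity, and the $B_2$ symmetry needed to meet the labelling hypothesis $|m_1-m_2|\geq|m_3-m_4|$ when $i\in\{3,4\}$ and $m_1=m_2$ --- are exactly what the paper's ``similarly'' suppresses.
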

\begin{proof}
Assume that $\alpha_1$ divides $\theta_{m}$.
Let $m^{(-1)}=(m_1-1,m_2,m_3,m_3)$.
By \Cref{cor:Abe}-\ref{cond:3a}, $\exp(m^{(-1)})=(\frac{|m|}{2}-1,\frac{|m|}{2})$.
On the other hand, since $\theta_{m}$ is a derivation of degree $\frac{|m|}{2}-1$ 
in $D(m)$, 
$\frac{1}{\alpha_1}\theta_{m}$ is a derivation of degree $\frac{|m|}{2}-2$
in $D(m^{(-1)})$.
This is contradiction. We can also prove that 
$\alpha_i$ does not divide $\theta_{m}$ for $i=2,3,4$ similarly.
\end{proof}

We can calculate an explicit basis for $D(m)$ for some special $m$.

\begin{cor}
 \label{cor:Q}
For a multiplicity $\mu=(\mu_{1},\mu_{2},\mu_{3},\mu_{3})$,
define $\theta(\mu)$ by $\theta(\mu)=\theta_\mu$, where 
$\theta_\mu$ is defined in \Cref{def:main}.
For a balanced multiplicity $m$, 
we can calculate a basis for $D(m)$ as follows:

\begin{enumerate}
\item 
The case where $m_3=m_4$

 \begin{enumerate}

\item 
If $|m|\in 4\ZZ$, we have the following:

  \begin{enumerate}
\item 
\label{cond:1ai}
If $m_1,m_2 \in 2\ZZ+1$, then \\
$D(m)=\langle \theta(m), \theta(m+(2,2,0,0)) \rangle_{S}$.

\item 
If $m_1,m_2 \in 2\ZZ$, then \\
$D(m)=\langle x\theta(m+(-1,1,0,0)), y\theta(m+(1,-1,0,0)) \rangle_{S}$.
  \end{enumerate}

\item 
If $|m|\in 4\mathbb{Z}+1$, we have the following:

  \begin{enumerate}
\item 
If $m_{1}\in2\ZZ$ and $m_{2}\in2\ZZ+1$, then \\ 
$D(m)=\langle x\theta(m+(-1,0,0,0)), \theta(m+(1,0,1,1)) \rangle_{S}$.
  \end{enumerate}

\item 
If $|m|\in 4\ZZ+2$, we have the following:
  \begin{enumerate}
\item 
\label{cond:1ci}
If $m_1,m_2 \in 2\ZZ+1$, then \\
$D(m)=\langle \theta(m+(2,0,0,0)), \theta(m+(0,2,0,0)) \rangle_{S}$.

\item 
If $m_1,m_2 \in 2\ZZ$, then \\
$D(m)=\langle \theta(m+(1,1,0,0)),xy\theta(m+(-1,-1,0,0)) \rangle_{S}$.
  \end{enumerate}

\item 
If $|m|\in 4\mathbb{Z}+3$, we have the following:

  \begin{enumerate}
\item 
\label{cond:1di}
If $m_{1}\in2\ZZ$ and $m_{2}\in2\ZZ+1$, then \\ 
$D(m)=\langle \theta(m+(1,0,0,0)), x\theta(m+(-1,0,1,1)) \rangle_{S}$.
  \end{enumerate}

 \end{enumerate}

\item 
The case where $m_3=m_4-1$

 \begin{enumerate} 
\item 
If $|m|\in 4\ZZ$, we have the following:

  \begin{enumerate}
\item 
If $m_1\in2\ZZ$ and $m_2 \in 2\ZZ+1$, then \\
$D(m)=\langle x\theta(m+(-1,0,1,0)), (x+y)\theta(m+(1,0,0,-1)) \rangle_{S}$.
  \end{enumerate}

\item 
If $|m|\in 4\ZZ+1$, we have the following:

  \begin{enumerate}
\item 
If $m_1,m_2 \in 2\ZZ+1$, then \\
$D(m)=\langle (x+y)\theta(m+(0,0,0,-1)), \theta(m+(2,0,1,0)) \rangle_{S}$.
  \end{enumerate}

\item 
If $|m|\in 4\ZZ+2$, we have the following:

  \begin{enumerate}
\item 
If $m_1\in2\ZZ$ and $m_2 \in 2\ZZ+1$, then \\
$D(m)=\langle \theta(m+(1,0,1,0)), x(x+y)\theta(m+(-1,0,0,-1)) \rangle_{S}$.
  \end{enumerate}

\item 
If $|m|\in 4\ZZ+3$, we have the following:

  \begin{enumerate}
\item 
If $m_1,m_2 \in 2\ZZ+1$, then \\
$D(m)=\langle \theta(m+(0,0,1,0)), (x-y)(x+y)^2\theta(m+(0,0,-1,-2)) \rangle_{S}$.
  \end{enumerate}

 \end{enumerate}

\item 
The case where $m_3=m_4-2$ 
 \begin{enumerate} 
\item 
If $|m|\in 4\ZZ+2$, we have the following:

  \begin{enumerate}
\item 
If $m_1,m_2 \in 2\ZZ+1$, then \\
$D(m)=\langle \theta(m+(0,0,2,0)), (x+y)^2\theta(m+(0,0,0,-2)) \rangle_{S}$.
  \end{enumerate}

 \end{enumerate}

\end{enumerate}
\end{cor}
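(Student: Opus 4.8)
The plan is to regard the statement as a finite list of cases and to dispatch each by a single template; only \Cref{cond:1ai} (the case $m_1,m_2\in2\ZZ+1$, $|m|\in4\ZZ$, $m_3=m_4$) requires nothing new, being a verbatim restatement of \Cref{thm:main}. For each of the remaining cases I would read off from the claimed answer a pair of generators written as $p_1\,\theta(\mu_1)$ and $p_2\,\theta(\mu_2)$, where every $\mu_k$ is the indicated shift of $m$ and every $p_k$ is the indicated monomial in $\alpha_1=x$, $\alpha_2=y$, $\alpha_3=x-y$, $\alpha_4=x+y$. The first thing to check, case by case, is that each $\mu_k$ meets the hypotheses of \Cref{def:main} and \Cref{thm:main}: its first two entries are odd, its last two entries coincide, and $|\mu_k|\in4\ZZ$. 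Granting this, \Cref{thm:main} tells us $\theta(\mu_k)\in D(\mu_k)$ and that it is homogeneous of degree $\tfrac{|\mu_k|}2-1$.

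Next I would verify $p_k\,\theta(\mu_k)\in D(m)$ by the elementary observation that multiplying a derivation by $\alpha_j$ raises its tolerated multiplicity along the $j$-th hyperplane by one while leaving the others intact: if $\theta(\alpha_i)\in\alpha_i^{(\mu_k)_i}S$ for all $i$, then $(\alpha_j\theta)(\alpha_i)=\alpha_j\,\theta(\alpha_i)\in\alpha_i^{(\mu_k)_i+\delta_{ij}}S$. Writing $a_k$ for the exponent vector of $p_k$, this gives $p_k\,\theta(\mu_k)\in D(\mu_k+a_k)\subseteq D(m)$, the last inclusion holding because in every listed case the shifts are arranged so that $\mu_k+a_k\geq m$ componentwise. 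The degree of $p_k\,\theta(\mu_k)$ is $\deg p_k+\tfrac{|\mu_k|}2-1$, and a one-line arithmetic check in each case confirms $\deg(p_1\theta(\mu_1))+\deg(p_2\theta(\mu_2))=|m|$. By \Cref{thm:Saito} it then suffices to prove the two generators are independent over $S$; once that is done, Saito's criterion simultaneously certifies both that they are a basis and that their degrees are the exponents, so the exponent data of \Cref{cor:Abe} is needed only in the one place noted below.

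Independence I would obtain from \Cref{lem:divide} via valuations along the $\alpha_i$. By \Cref{cor:P} each $\theta(\mu_k)$ is divisible by no $\alpha_i$, so the $\alpha_i$-adic valuation $v_i$ of $p_k\theta(\mu_k)$ equals the exponent of $\alpha_i$ in $p_k$. In every case except \Cref{cond:1ci} there is an $\alpha_i$ occurring in the lower-degree generator to a strictly higher power than in the higher-degree one; a hypothetical relation $(\text{higher})=h\cdot(\text{lower})$ then forces $v_i(\text{higher})\geq v_i(\text{lower})$, which is false, so no such $h$ exists and \Cref{lem:divide} applies. The genuine obstacle is \Cref{cond:1ci}, where the two generators are the bare derivations $\theta(m+(2,0,0,0))$ and $\theta(m+(0,2,0,0))$ of equal degree $\tfrac{|m|}2$ and no divisibility distinguishes them; here I would argue that proportionality would place one of them in $D(m+(2,0,0,0))\cap D(m+(0,2,0,0))=D(m+(2,2,0,0))$, whose least degree is $\tfrac{|m|}2+2$ by \Cref{cor:Abe}, contradicting its actual degree $\tfrac{|m|}2$.

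The point demanding the most care is that \Cref{cor:P} presupposes that the shifted multiplicity $\mu_k$ is itself balanced, while only $m$ is assumed balanced. I would check that the parity constraints built into each $\mu_k$ (two odd entries together with $|\mu_k|\in4\ZZ$) keep balancedness intact: for an odd entry the slack $|\mu_k|-2(\mu_k)_i$ is $\equiv2\pmod4$, hence at least $2$ whenever it is nonnegative, so a unit shift cannot push $\mu_k$ past the balanced range. The finitely many boundary configurations---those in which a shift would create a non-balanced or degenerate (e.g.\ negative) multiplicity---I would dispose of directly, reading the nonvanishing leading coefficient of $f_{\mu_k}$ or $g_{\mu_k}$ off \Cref{def:main} to conclude that the relevant $\alpha_i$ does not divide $\theta(\mu_k)$. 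Finally I would note that the order-$8$ symmetry of the $B_2$ system permuting $\{\alpha_1,\alpha_2,\alpha_3,\alpha_4\}$ reduces an arbitrary balanced $m$ to one of the listed normal forms, so the enumeration is exhaustive.
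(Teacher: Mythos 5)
Your overall architecture---membership via multiplication by the $\alpha_j$'s, a degree count, non-proportionality via \Cref{cor:P}, and an exponents-based contradiction for Case \ref{cond:1ci}---is essentially the paper's strategy, but two of your steps are genuinely broken as stated. First, the reduction is circular. \Cref{thm:Saito} requires linear independence over $S$, which is strictly stronger than ``$\theta_2 \neq h\theta_1$ for all $h\in S_{e_2-e_1}$'': the homogeneous derivations $x^2\der_x$ and $y^2\der_x$ are $S$-dependent although neither is a polynomial multiple of the other. Your valuation argument only rules out polynomial proportionality, so it does not supply Saito's hypothesis. The correct bridge is \Cref{lem:divide}, but that lemma takes $\exp(m)$ as an input---so \Cref{cor:Abe} is needed in \emph{every} nontrivial case, not ``only in the one place'' as you claim. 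This is exactly how the paper proceeds: it invokes \Cref{lem:divide} up front (with exponents from \Cref{cor:Abe}) and only then reduces to non-proportionality, settled by \Cref{cor:P} or by the exponents contradiction.

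Second, your blanket claim that in every case except \ref{cond:1ci} some $\alpha_i$ occurs in the lower-degree generator to a strictly higher power than in the higher-degree one is false. In Case \ref{cond:1di} the lower generator $\theta(m+(1,0,0,0))$ carries no $\alpha$-factor at all (its valuations all vanish by \Cref{cor:P}), and the factor $x$ sits on the \emph{higher} generator, where a linear $h$ proportional to $x$ absorbs it; no inequality in your stated direction yields a contradiction. This case needs the same exponents argument as \ref{cond:1ci}---if $x\theta(m+(-1,0,1,1))=h\theta(m+(1,0,0,0))$ then $h=cx$ by \Cref{cor:P}, making $\theta(m+(1,0,0,0))$ a degree-$\frac{|m|-1}{2}$ element of $D(m+(1,0,1,1))$, contradicting \Cref{cor:Abe}---which is precisely why the paper singles out \ref{cond:1di} as ``almost similar'' to \ref{cond:1ci}. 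Likewise in the case $m_3=m_4-1$, $|m|\in4\ZZ+3$, both factors $(x-y)(x+y)^2$ sit on the higher generator, and the contradiction must come from the \emph{reversed} inequality $v_{\alpha_4}(\text{higher})=2>1\geq v_{\alpha_4}(h)+v_{\alpha_4}(\text{lower})$, which your one-directional argument does not state. Two smaller points: your exhaustiveness claim via the $B_2$ symmetry is neither needed (the corollary is a list of special cases, not a classification; e.g.\ the balanced $m=(2,4,3,4)$ matches no listed case even up to symmetry) nor part of the statement; and your boundary repair via nonvanishing leading coefficients can fail, since for a non-balanced boundary $\mu_k$, \Cref{lem:F} shows $\theta_{\mu_k}$ is a monomial multiple of a single $\der_x$ or $\der_y$, so some $\alpha_i$ genuinely does divide it there.
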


\begin{proof}
Case \ref{cond:1ai} is the direct consequence of \Cref{thm:main}.
For the other cases, by \Cref{lem:divide}, 
it is sufficient to show that each pair of derivations are not equal even up to scalar.
For Case \ref{cond:1ci},
assume that 
$\theta(m+(2,0,0,0))$ is equal to $\theta(m+(0,2,0,0))$ up to scalar.
Then, $\theta(m+(2,0,0,0))$ is a derivation of degree $\frac{|m|}{2}$ 
in $D(m+(2,2,0,0))$.
On the other hand, $\exp(m+(2,2,0,0))=(\frac{|m|}{2}+2,\frac{|m|}{2}+2)$ by \Cref{cor:Abe}.
This is contradiction.
The proof of Case \ref{cond:1di} is almost similar.
For the other cases, 
\Cref{cor:P} completes the proof.
\end{proof}

\begin{remark}
In \cite{MR3070120},
Abe has proved the following:
Let $\BBB$ be an arrangement in the $2$-dimensional vector space over a field of characteristic zero 
and 
$\nu$ be a multiplicity on $\BBB$. 
If $\nu$ is balanced,  
then the difference of exponents of $(\BBB,\nu)$ is less than or equal to 
$|\BBB|-2$.
Since we assume that 
$|\AAA|-2=4-2=2$, 
we obtain a special case of this theorem from \Cref{cor:Abe}.
\end{remark}

We give some examples of $\theta_m$.
\begin{example}
For some $m$, the derivations $\theta_m$ are the following:
\begin{align*}
    \theta_{(3,5,2,2)}&=
    \textstyle\left(\frac{1}{10} \, x^{5} - \frac{1}{6} \, x^{3} y^{2}\right)\partial_x+\left(-\frac{1}{15} \, y^{5}\right)\partial_y
,
\\
    \theta_{(3,5,4,4)}&=
    \textstyle\left(\frac{1}{280} \, x^{7} - \frac{1}{60} \, x^{5} y^{2} + \frac{1}{24} \, x^{3} y^{4}\right)\partial_x+\left(\frac{1}{30} \, x^{2} y^{5} - \frac{1}{210} \, y^{7}\right)\partial_y
,
\\
    \theta_{(3,9,4,4)}&=
    \textstyle\left(\frac{1}{432} \, x^{9} - \frac{1}{112} \, x^{7} y^{2} + \frac{1}{80} \, x^{5} y^{4} - \frac{1}{144} \, x^{3} y^{6}\right)\partial_x+\left(-\frac{1}{945} \, y^{9}\right)\partial_y
,
\\
    \theta_{(5,7,2,2)}&=
    \textstyle\left(\frac{1}{42} \, x^{7} - \frac{1}{30} \, x^{5} y^{2}\right)\partial_x+\left(-\frac{1}{105} \, y^{7}\right)\partial_y
,
\\
    \theta_{(5,7,4,4)}&=
    \textstyle\left(\frac{1}{504} \, x^{9} - \frac{1}{140} \, x^{7} y^{2} + \frac{1}{120} \, x^{5} y^{4}\right)\partial_x+\left(\frac{1}{210} \, x^{2} y^{7} - \frac{1}{630} \, y^{9}\right)\partial_y
,
\\
    \theta_{(5,11,4,4)}&=
    \textstyle\left(\frac{1}{1584} \, x^{11} - \frac{1}{432} \, x^{9} y^{2} + \frac{1}{336} \, x^{7} y^{4} - \frac{1}{720} \, x^{5} y^{6}\right)\partial_x+\left(-\frac{1}{10395} \, y^{11}\right)\partial_y
,
\\
    \theta_{(7,9,2,2)}&=
    \textstyle\left(\frac{1}{270} \, x^{9} - \frac{1}{210} \, x^{7} y^{2}\right)\partial_x+\left(-\frac{1}{945} \, y^{9}\right)\partial_y
,
\\
    \theta_{(7,9,4,4)}&=
    \textstyle\left(\frac{1}{2376} \, x^{11} - \frac{1}{756} \, x^{9} y^{2} + \frac{1}{840} \, x^{7} y^{4}\right)\partial_x+\left(\frac{1}{1890} \, x^{2} y^{9} - \frac{1}{4158} \, y^{11}\right)\partial_y
,
\\
    \theta_{(7,13,4,4)}&=
    \textstyle\left(\frac{1}{9360} \, x^{13} - \frac{1}{2640} \, x^{11} y^{2} + \frac{1}{2160} \, x^{9} y^{4} - \frac{1}{5040} \, x^{7} y^{6}\right)\partial_x+\left(-\frac{1}{135135} \, y^{13}\right)\partial_y

.
\end{align*}
\end{example}

\begin{example}
For some $m$ with $m_1=1$, the derivations $\theta_m$ are the following:
\begin{align*}
    \theta_{(1,3,2,2)}&=
    \textstyle\left(\frac{1}{6} \, x^{3} - \frac{1}{2} \, x y^{2}\right)\partial_x+\left(-\frac{1}{3} \, y^{3}\right)\partial_y
,
\\
    \theta_{(1,5,3,3)}&=
    \textstyle\left(\frac{1}{40} \, x^{5} - \frac{1}{12} \, x^{3} y^{2} + \frac{1}{8} \, x y^{4}\right)\partial_x+\left(\frac{1}{15} \, y^{5}\right)\partial_y
,
\\
    \theta_{(1,3,4,4)}&=
    \textstyle\left(-\frac{1}{120} \, x^{5} + \frac{1}{12} \, x^{3} y^{2} + \frac{1}{8} \, x y^{4}\right)\partial_x+\left(\frac{1}{6} \, x^{2} y^{3} + \frac{1}{30} \, y^{5}\right)\partial_y
,
\\
    \theta_{(1,7,4,4)}&=
    \textstyle\left(\frac{1}{336} \, x^{7} - \frac{1}{80} \, x^{5} y^{2} + \frac{1}{48} \, x^{3} y^{4} - \frac{1}{48} \, x y^{6}\right)\partial_x+\left(-\frac{1}{105} \, y^{7}\right)\partial_y
,
\\
    \theta_{(1,5,5,5)}&=
    \textstyle\left(-\frac{1}{1680} \, x^{7} + \frac{1}{240} \, x^{5} y^{2} - \frac{1}{48} \, x^{3} y^{4} - \frac{1}{48} \, x y^{6}\right)\partial_x+\left(-\frac{1}{30} \, x^{2} y^{5} - \frac{1}{210} \, y^{7}\right)\partial_y
,
\\
    \theta_{(1,9,5,5)}&=
    \textstyle\left(\frac{1}{3456} \, x^{9} - \frac{1}{672} \, x^{7} y^{2} + \frac{1}{320} \, x^{5} y^{4} - \frac{1}{288} \, x^{3} y^{6} + \frac{1}{384} \, x y^{8}\right)\partial_x+\left(\frac{1}{945} \, y^{9}\right)\partial_y
,
\\
    \theta_{(1,3,6,6)}&=
    \textstyle\left(\frac{1}{1680} \, x^{7} - \frac{1}{80} \, x^{5} y^{2} - \frac{1}{16} \, x^{3} y^{4} - \frac{1}{48} \, x y^{6}\right)\partial_x\\
    &\textstyle+\left(-\frac{1}{24} \, x^{4} y^{3} - \frac{1}{20} \, x^{2} y^{5} - \frac{1}{280} \, y^{7}\right)\partial_y
,
\\
    \theta_{(1,7,6,6)}&=
    \textstyle\left(-\frac{1}{24192} \, x^{9} + \frac{1}{3360} \, x^{7} y^{2} - \frac{1}{960} \, x^{5} y^{4} + \frac{1}{288} \, x^{3} y^{6} + \frac{1}{384} \, x y^{8}\right)\partial_x\\
    &\textstyle+\left(\frac{1}{210} \, x^{2} y^{7} + \frac{1}{1890} \, y^{9}\right)\partial_y
,
\\
    \theta_{(1,11,6,6)}&=
    \textstyle\left(\frac{1}{42240} \, x^{11} - \frac{1}{6912} \, x^{9} y^{2} + \frac{1}{2688} \, x^{7} y^{4} - \frac{1}{1920} \, x^{5} y^{6} + \frac{1}{2304} \, x^{3} y^{8}\right.\\
    &\textstyle\left.-\frac{1}{3840} \, x y^{10}\right)\partial_x+\left(-\frac{1}{10395} \, y^{11}\right)\partial_y
,
\\
    \theta_{(1,5,7,7)}&=
    \textstyle\left(\frac{1}{40320} \, x^{9} - \frac{1}{3360} \, x^{7} y^{2} + \frac{1}{320} \, x^{5} y^{4} + \frac{1}{96} \, x^{3} y^{6} + \frac{1}{384} \, x y^{8}\right)\partial_x\\
    &\textstyle+\left(\frac{1}{120} \, x^{4} y^{5} + \frac{1}{140} \, x^{2} y^{7} + \frac{1}{2520} \, y^{9}\right)\partial_y

.
\end{align*}
\end{example}

\section{Proof of Main Results}
\label{sec:proofofmainresults}
In this section, for a multiplicity $m=(m_1,m_2,m_3,m_3)$, we give a proof of main results by using double induction on $m_1$ and $m_2$.

The following lemma is one of base cases for the induction on $m_1$.
We consider the case where a multiplicity is not balanced but close to a balanced multiplicity.

\begin{lemma}
  \label{lem:F}
Let $m=(m_1,m_2,m_3,m_3)$ satisfy $m_1$,$m_2 \in 2\ZZ +1$ and $|m|\in4\ZZ$.
If $m$ is not balanced but satisfies $2m_i \leq |m|+2$ for all $i$, then
$\theta_{m} \in D(m)$.
\end{lemma}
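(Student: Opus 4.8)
The plan is to first pin down which multiplicity is responsible for $m$ failing to be balanced, and then to reduce the membership $\theta_m\in D(m)$ to a single polynomial factorization. Since $m_3=m_4$ and $m_1,m_2\ge 1$, we have $2m_3=|m|-m_1-m_2<|m|$, so the third and fourth multiplicities are automatically balanced; hence $m$ can fail to be balanced only at index $1$ or $2$. For index $1$ the hypotheses give $|m|\le 2m_1\le|m|+2$, and because $m_1$ is odd we have $2m_1\equiv 2\pmod 4$ while $|m|\equiv 0\pmod 4$, so $2m_1\notin\{|m|,|m|+1\}$ and therefore $2m_1=|m|+2$, i.e.\ $m_1=d+2$. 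The same parity argument applies at index $2$, and $m_1=m_2=d+2$ is impossible since it would force $2m_3=|m|-2(d+2)=-2$. Thus exactly one of $m_1,m_2$ equals $d+2$.

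Suppose $m_1=d+2$ (the case $m_2=d+2$ is symmetric under $x\leftrightarrow y$, i.e.\ after interchanging the roles of $m_1$ and $m_2$). Then $\frac{d-m_1}{2}=-1<0$, so the sum defining $f_m$ is empty, $f_m=0$, and $\theta_m=-g_m\der_y$. Evaluating on the four forms gives $\theta_m(\alpha_1)=f_m=0$, $\theta_m(\alpha_2)=-g_m$, $\theta_m(\alpha_3)=g_m$ and $\theta_m(\alpha_4)=-g_m$. The first condition is automatic, and $y^{m_2}\mid g_m$ is visible from the factored form of $g_m$ in \Cref{def:main}. Since $y,\,x-y,\,x+y$ are pairwise coprime and $\deg g_m=d=m_2+2m_3=\deg\bigl(y^{m_2}(x-y)^{m_3}(x+y)^{m_3}\bigr)$, all three remaining divisibilities follow at once once I establish the closed form
\begin{align*}
g_m=c\,y^{m_2}(x^2-y^2)^{m_3}\qquad\text{for some constant }c.
\end{align*}

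To prove this, substitute $m_1=d+2$, which turns the summation bound into $\frac{d-m_2}{2}=m_3$ and the shifted argument into $m_1+m_2-d=m_2+2$. Re-indexing the sum by $j=m_3-i$ and factoring out $y^{m_2}$, the identity reduces to the purely numerical claim
\begin{align*}
\frac{\dps{m_2+2}{j}}{(2j)!!\,(m_2+2j)!!\,(2(m_3-j))!!}=c\binom{m_3}{j}\qquad(0\le j\le m_3),
\end{align*}
with $c$ independent of $j$. This follows from the two elementary identities $\dps{m_2+2}{j}=(m_2+2j)!!/m_2!!$ (valid because $m_2$ is odd) and $(2j)!!\,(2(m_3-j))!!=2^{m_3}j!\,(m_3-j)!$, which together give $c=\bigl(m_2!!\,2^{m_3}m_3!\bigr)^{-1}$. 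Hence $g_m$ is a scalar multiple of $y^{m_2}(x-y)^{m_3}(x+y)^{m_3}$, finishing this case; the case $m_2=d+2$ is handled identically, with $g_m=0$ and $f_m$ a scalar multiple of $x^{m_1}(x-y)^{m_3}(x+y)^{m_3}$.

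I expect the only genuine obstacle to be the last step: recognizing, after the correct re-indexing, that the double-Pochhammer coefficients of $g_m$ are a $j$-independent constant times the binomial coefficients $\binom{m_3}{j}$, so that the long alternating sum collapses to the single product $(x^2-y^2)^{m_3}$. Everything else—locating the unbalanced index by parity, and matching degrees to upgrade the three coprime divisibilities into an equality up to scalar—is routine bookkeeping.
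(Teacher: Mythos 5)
Your proposal is correct and follows essentially the same route as the paper: both reduce to the case where one of $m_1,m_2$ equals $\frac{|m|}{2}+1$ (so that one of $f_m,g_m$ is an empty sum, hence zero), and both collapse the surviving sum via the identities $\dps{m_2+2}{j}=(m_2+2j)!!/m_2!!$ and $(2j)!!\,(2(m_3-j))!!=2^{m_3}j!\,(m_3-j)!$ into a constant times binomial coefficients, whence the binomial theorem yields a scalar multiple of $\alpha_i^{m_i}\alpha_3^{m_3}\alpha_4^{m_3}$ times a coordinate derivation. The only cosmetic differences are that you treat the mirror case ($m_1$ maximal, working with $g_m$) where the paper takes $m_2=m_1+2m_3+2$ and works with $f_m$, and that you make explicit the parity argument forcing $2m_i=|m|+2$ at exactly one of the first two indices, which the paper leaves implicit in its ``without loss of generality.''
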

\begin{proof}
Without loss of generality, we can assume that
$m_2=m_1+2m_3+2$.
It is obvious that $|m|=2m_1+4m_3+2$ and $d=m_1+2m_3$. By \Cref{def:main},
\begin{align*}
f_{m}
&=
x^{m_1}\cdot
\sum_{i=0}^{m_3} (-1)^i
\frac{
\dps{m_1+2}
{m_3-i}
}
{(2m_3-2i)!!(m_1+2m_3-2i)!!(2i)!!}
x^{2m_3-2i}y^{2i}.
\end{align*}
Since
$(2k)!!
=(2k)(2k-2)\cdots4\cdot2
=2^k\cdot k!$
for all $k \in \ZZ_{>0}$,
\begin{align*}
\frac{
\dps{m_1+2}
{m_3-i}
}
{(2m_3-2i)!!(m_1+2m_3-2i)!!(2i)!!}
&=
\frac{
(m_1+2)(m_1+4)\cdots(m_1+2m_3-2i)
}
{2^{m_3-i}(m_3-i)!(m_1+2m_3-2i)!!2^i\cdot i!}\\
&=
\frac{1}{2^{m_3} \cdot m_1!!(m_3-i)!i!}.
\end{align*}
Hence
\begin{align*}
f_{m}
&=
x^{m_1}\cdot
\sum_{i=0}^{m_3} (-1)^i
\frac{1}{2^{m_3} \cdot m_1!!(m_3-i)!i!}
x^{2m_3-2i}y^{2i}\\
&=
\frac{x^{m_1}}
{m_1!!(2^{m_3} \cdot m_3!)}
\cdot
\sum_{i=0}^{m_3} (-1)^i
\frac{m_3!}{(m_3-i)!i!}
x^{2m_3-2i}y^{2i}\\
&=
\frac{x^{m_1}}
{m_1!!(2m_3)!!}
\cdot
\sum_{i=0}^{m_3}
\binom{m_3}{i}
(x^2)^{m_3-i}(-y^2)^i\\
&=
\frac{x^{m_1}}
{m_1!!(2m_3)!!}
\cdot
(x^2-y^2)^{m_3}\\
&=
\frac{1}
{m_1!!(2m_3)!!}
\cdot
x^{m_1}
(x-y)^{m_3}(x+y)^{m_3}.
\end{align*}
On the other hand, since
$d-m_2=(m_1+2m_3)-(m_1+2m_3+2)
=-2$,
\begin{align*}
g_{m}
=
\sum_{i=0}^{\frac{d-m_{2}}{2}} (-1)^{i+m_{3}} 
\frac{
\dps{m_{1}+m_{2}-d}
{\frac{d-m_{2}}{2}-i}
}
{(d-m_{2}-2i)!!(d-2i)!!(2i)!!}
x^{2i}y^{d-2i}
=0.
\end{align*}
Therefore,
\begin{align*}
\theta_{m}
&=
f_{m}\der_{x} -g_{m}\der_{y}\\
&=
\frac{1}
{m_1!!(2m_3)!!}
\cdot
x^{m_1}
(x-y)^{m_3}(x+y)^{m_3}
\der _{x}\\
&=
\frac{1}
{m_1!!(2m_3)!!}
\cdot
\alpha_1^{m_1}
\alpha_3^{m_3}\alpha_4^{m_3}
\der _{x}
\in D(m).
\end{align*}
\end{proof}

The following lemma is the base case for the induction on $m_2$.

\begin{lemma}
  \label{lem:A}
For $m=(1,1,d,d)$ with $d \in 2\ZZ +1$, 
$\theta_{m} \in D(m)$.
\end{lemma}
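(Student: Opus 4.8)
The plan is to verify the four membership conditions of $D(m)$ directly, after first putting $f_m$ and $g_m$ into closed form. Here $m_3 = m_4 = d$, and one checks that the degree parameter of \Cref{def:main} is $\frac{|m|}{2}-1 = d$ as well, so the two uses of $d$ coincide. Since $\theta_m = f_m\der_{x} - g_m\der_{y}$ and $\alpha_1 = x$, $\alpha_2 = y$, $\alpha_3 = x-y$, $\alpha_4 = x+y$, a one-line computation gives $\theta_m(\alpha_1) = f_m$, $\theta_m(\alpha_2) = -g_m$, $\theta_m(\alpha_3) = f_m + g_m$ and $\theta_m(\alpha_4) = f_m - g_m$. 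Hence $\theta_m \in D(m)$ is equivalent to the four divisibilities $x \mid f_m$, $y \mid g_m$, $(x-y)^d \mid (f_m + g_m)$ and $(x+y)^d \mid (f_m - g_m)$. The first two are immediate from the factored forms in \Cref{def:main}, whose prefactors are $x^{m_1} = x$ and $y^{m_2} = y$; so the content lies in the last two.

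For those, I would compute $f_m$ and $g_m$ explicitly. Specializing \Cref{def:main} to $m = (1,1,d,d)$ gives $m_1 + m_2 - d = 2 - d$ and $\frac{d-m_1}{2} = \frac{d-m_2}{2} = \frac{d-1}{2}$. Putting $k = \frac{d-1}{2} - i$, the factors of $\dps{2-d}{k} = \prod_{j=0}^{k-1}(2 - d + 2j)$ run from $2-d$ up to $-1-2i$; since $d$ is odd these are all negative odd integers, and pulling out the signs yields $\dps{2-d}{k} = (-1)^{k}\,(d-2)!!/(2i-1)!!$. The sign then combines as $(-1)^{i}(-1)^{k} = (-1)^{(d-1)/2}$, independent of $i$, and the double factorials collapse via $(2i-1)!!(2i)!! = (2i)!$ and $(d-2i)!!(d-1-2i)!! = (d-2i)!$. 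With $A := (-1)^{(d-1)/2}(d-2)!!/d!$ one finds that the coefficient of $x^{d-2i}y^{2i}$ in $f_m$ equals $A\binom{d}{2i}$, and that of $x^{2i}y^{d-2i}$ in $g_m$ equals $-A\binom{d}{2i}$, the extra sign being $(-1)^{m_3} = (-1)^{d} = -1$.

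Summing the even-index binomial expansions then gives the closed forms
\[
f_m = \tfrac{A}{2}\bigl[(x+y)^{d} + (x-y)^{d}\bigr], \qquad
g_m = \tfrac{A}{2}\bigl[(x-y)^{d} - (x+y)^{d}\bigr].
\]
From these, $f_m + g_m = A(x-y)^{d}$ and $f_m - g_m = A(x+y)^{d}$, which establish the two nontrivial divisibilities at once; and since $d$ is odd the polynomials $(x+y)^{d} \pm (x-y)^{d}$ vanish at $x=0$ and at $y=0$ respectively, reconfirming $x\mid f_m$ and $y\mid g_m$. Therefore $\theta_m \in D(m)$. As a sanity check, $d=1$ returns $\theta_m = x\der_{x} + y\der_{y}$, the Euler derivation.

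The one delicate step is the combinatorial simplification in the second paragraph: one must track the signs in $\dps{2-d}{k}$ correctly and handle the boundary indices $i = 0$, where $(2i-1)!! = (-1)!! = 1$, and $i = \frac{d-1}{2}$, where $k = 0$, $\dps{2-d}{0} = 1$ and $d - 2i = 1$. Once these double-factorial identities are in hand the divisibilities are immediate. This computation runs parallel to the simplification carried out in the proof of \Cref{lem:F}.
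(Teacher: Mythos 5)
Your proof is correct and takes essentially the same route as the paper's: the identical sign extraction $\dps{2-d}{\frac{d-1}{2}-i}=(-1)^{\frac{d-1}{2}-i}(d-2)!!/(2i-1)!!$ collapsing the coefficients to $A\binom{d}{2i}$ with $A=(-1)^{\frac{d-1}{2}}/((d-1)!!\,d)$, followed by recognizing $f_m+g_m$ and $f_m-g_m$ as scalar multiples of $(x-y)^d$ and $(x+y)^d$. The only cosmetic differences are that you write the closed forms $f_m=\frac{A}{2}\bigl[(x+y)^d+(x-y)^d\bigr]$ and $g_m=\frac{A}{2}\bigl[(x-y)^d-(x+y)^d\bigr]$ directly where the paper instead invokes the symmetry $g_m(x,y)=-f_m(y,x)$, and that your empty-product conventions absorb the $d=1$ case which the paper treats separately.
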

\begin{proof}
If $d =1$, we have 
$\theta_m = x\partial_x+y\partial_y \in D(m)$.
Hence we only consider $d \geq 3$.
For $m=(1,1,d,d)$,
\begin{align*}
f_{m}
&=
\sum_{i=0}^{\frac{d-1}{2}} (-1)^i 
\frac{
\dps{2-d}
{\frac{d-1}{2}-i}
}
{(d-1-2i)!!(d-2i)!!(2i)!!}
x^{d-2i}y^{2i}.
\end{align*}
Since $(d-1-2i)!!(d-2i)!!=(d-2i)!$,
\begin{align*}
f_{m}
&=
\sum_{i=0}^{\frac{d-1}{2}} (-1)^i 
\frac{
\dps{2-d}
{\frac{d-1}{2}-i}
}
{(d-2i)!(2i)!!}
x^{d-2i}y^{2i}.
\end{align*}
Since $0\leq i \leq \frac{d-1}{2}$,
$2-d+2((\frac{d-1}{2}-i)-1)=-2i-1 \leq-1$.
Since $2-d$ is also negative, we have 
\begin{align*}
\dps{2-d}{\frac{d-1}{2}-i}
&=(-1)^{\frac{d-1}{2}-i}\dps{2i+1}{\frac{d-1}{2}-i}\\
&=(-1)^{\frac{d-1}{2}-i}\frac{(d-2)!!}{(2i-1)!!}\\
&=(-1)^{\frac{d-1}{2}-i}\frac{(d-1)!}{(2i-1)!!(d-1)!!}\\
&=(-1)^{\frac{d-1}{2}-i}\frac{d!}{(2i-1)!!(d-1)!!d}.
\end{align*}
Hence 
\begin{align*}
f_{m}
&=
\sum_{i=0}^{\frac{d-1}{2}} (-1)^i 
\frac{1}
{(d-2i)!(2i)!!}\cdot
(-1)^{\frac{d-1}{2}-i}\frac{d!}{(2i-1)!!(d-1)!!d}
x^{d-2i}y^{2i}\\
&=
(-1)^{\frac{d-1}{2}}
\frac{1}{(d-1)!!d}\cdot
\sum_{i=0}^{\frac{d-1}{2}}
\frac{d!}
{(d-2i)!(2i)!}
x^{d-2i}y^{2i}\\
&=
(-1)^{\frac{d-1}{2}}
\frac{1}
{(d-1)!!d}\cdot
\sum_{i=0}^{\frac{d-1}{2}} 
\binom{d}{2i}
x^{d-2i}y^{2i}.
\end{align*}
On the other hand, 
\begin{align*}
g_{m}(x,y)&=
\sum_{i=0}^{\frac{d-1}{2}} (-1)^{i+d} 
\frac{
\dps{2-d}
{\frac{d-1}{2}-i}
}
{(d-1-2i)!!(d-2i)!!(2i)!!}
x^{2i}y^{d-2i}\\
&=
-\sum_{i=0}^{\frac{d-1}{2}} (-1)^{i} 
\frac{
\dps{2-d}
{\frac{d-1}{2}-i}
}
{(d-1-2i)!!(d-2i)!!(2i)!!}
x^{2i}y^{d-2i}\\
&=
-f_{m}(y,x)\\
&=
-(-1)^{\frac{d-1}{2}}
\frac{1}
{(d-1)!!d}\cdot
\sum_{i=0}^{\frac{d-1}{2}}
\binom{d}{2i}
x^{2i}y^{d-2i}.
\end{align*}
Therefore, it follows that 
\begin{align*}
\theta_{m}(x-y)=
f_{m}+g_{m}&=(-1)^{\frac{d-1}{2}}
\frac{1}
{(d-1)!!d}\cdot(x-y)^{d},\\
\theta_{m}(x+y)=
f_{m}-g_{m}&=(-1)^{\frac{d-1}{2}}
\frac{1}
{(d-1)!!d}\cdot(x+y)^{d}.
\end{align*}
Since $x^{m_1}$ divides $\theta_m(x)=f_m$ and $y^{m_2}$divides $\theta_m(y)=-g_m$ by \Cref{def:main}, 
we have $\theta_m \in D(m)$. 
\end{proof}

\Cref{lem:B,lem:C} are essential for the induction step on $m_2$.

\begin{lemma}
  \label{lem:B}
Let $m=(1,m_2,m_3,m_3)$ satisfy $m_2 \in 2\ZZ+1$ and $|m|\in 4\ZZ$. Moreover, 
let $m'=(1,m_2,m_3+2,m_3+2)$ and $m''=(1,m_2+2,m_3+1,m_3+1)$.
If $m$ is balanced, then
\begin{align*}
e\theta_{m''}
&=
b_0\theta_{m'}
-d_0(x^2-y^2)\theta_{m}\\
&=
b_0\theta_{m'}
-d_0\alpha_3\alpha_4\theta_{m},
\end{align*}
where
\begin{align*}
b_0 &= (-1)^{\frac{d-m_2}{2}+m_3}\frac{1}{m_2 !!(d-m_2)!!},\\
d_0 &= (-1)^{\frac{d-m_2}{2}+m_3+1}\frac{1}{m_2 !!(d+2-m_2)!!},\\
e   &= (-1)^{\frac{d-m_2}{2}+m_3+1}\frac{m_2-2d-3}{m_2!!(d+2-m_2)!!} \ \ \text{for}\\ 
d   &= \frac{|m|}{2}-1.
\end{align*}
\end{lemma}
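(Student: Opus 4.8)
The plan is to exploit the fact that $\theta_m,\theta_{m'},\theta_{m''}$ are given by the fully explicit formulas of \Cref{def:main}, so the assertion is really a pair of polynomial identities that can be checked coefficient by coefficient; a more structural argument (showing the right-hand side lies in $D(m'')$ and invoking uniqueness of the lowest-degree element) would be circular here, since $m''$ has larger $m_2$ and is not yet available in the induction. First I would divide the claimed equation by the common nonzero scalar $\lambda=(-1)^{(d-m_2)/2+m_3}/(m_2!!\,(d+2-m_2)!!)$: one checks directly that $b_0=\lambda(d+2-m_2)$, $d_0=-\lambda$, and $e=\lambda(2d+3-m_2)$, so, using $\alpha_3\alpha_4=x^2-y^2$, the statement is equivalent to the normalization-free identity
\begin{align*}
(2d+3-m_2)\,\theta_{m''}=(d+2-m_2)\,\theta_{m'}+(x^2-y^2)\,\theta_{m}.
\end{align*}
Since $|m'|=|m''|=|m|+4$, all three terms are homogeneous of degree $d+2$, so it suffices to match coefficients of each monomial.

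Next I would split this into its $\der_x$- and $\der_y$-components, i.e. into an identity among the $f$'s and one among the $g$'s. For the $\der_x$-part, write $a=1+m_2-d$ for the base appearing in the double Pochhammer symbols and compare coefficients of $x^{d+2-2i}y^{2i}$. The one subtlety is that $(x^2-y^2)f_m$ contributes through $x^2f_m$ (same index, symbol $\dps{a}{\frac{d-1}{2}-i}$) and through $-y^2f_m$ (after reindexing $i\mapsto i+1$, symbol $\dps{a}{\frac{d+1}{2}-i}$), whereas $f_{m''}$ carries $\dps{a}{\frac{d+1}{2}-i}$ and $f_{m'}$ carries $\dps{a-2}{\frac{d+1}{2}-i}$. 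Setting $k=\frac{d+1}{2}-i$ and using the recursions $\dps{a}{k}=(a+2k-2)\dps{a}{k-1}$ and $\dps{a-2}{k}=(a-2)\dps{a}{k-1}$ together with $n!!\,(n-1)!!=n!$ and $(2i)!!/(2i-2)!!=2i$, every coefficient equation reduces, after clearing denominators and factoring out $\dps{a}{k-1}$, to
\begin{align*}
(d+4-a)(a+2k-2)=(3-a)(a-2)+2k(2k+1)+(d+1-2k)(a+2k-2),
\end{align*}
which is a routine polynomial identity in $a,k,d$ (recall $2i=d+1-2k$): both sides expand to $-a^{2}+(d+6-2k)a+2dk-2d+8k-8$. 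The two boundary monomials ($i=0$ and the top index $i=\frac{d+1}{2}$) require no separate treatment, since there the missing contribution corresponds exactly to a term whose polynomial coefficient ($2i$, respectively $2k(2k+1)$) vanishes.

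I would then carry out the completely parallel computation for the $\der_y$-component (recall $\theta_\mu=f_\mu\der_x-g_\mu\der_y$, so this is the identity $(2d+3-m_2)g_{m''}=(d+2-m_2)g_{m'}+(x^2-y^2)g_m$). The structure is the same, but the monomials are now $x^{2i}y^{d+2-2i}$ and the double factorials carry the offset $m_2$; moreover the upper Pochhammer index $\frac{d-m_2}{2}-i$ is \emph{unchanged} in passing from $g_m$ to $g_{m''}$ while it increases by one for $g_{m'}$. Hence its reduction is a separate, though entirely analogous, polynomial identity of the same kind. I expect the main obstacle to be purely organizational: keeping the index shifts, the three distinct double Pochhammer symbols (two bases $a,a-2$ and two upper indices $k-1,k$), and the signs $(-1)^i$ and $(-1)^{i+m_3}$ straight long enough to collapse each coefficient equation to its underlying form, linear in $\dps{a}{k-1}$. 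Once that collapse is made, the remaining algebra is elementary.

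Finally, I would record where the hypotheses enter. The balancedness of $m$ is precisely what gives $\frac{d-m_2}{2}\ge 0$ (by the Remark following \Cref{def:main}), and this in turn forces $\frac{d+2-m_2}{2}\ge 0$ and $\frac{(d+2)-(m_2+2)}{2}\ge 0$, so the corresponding sums for $m'$ and $m''$ are genuine; together with $\frac{d-1}{2}\ge 0$ this guarantees that every sum in \Cref{def:main} runs over a nonnegative range and that the reindexing of $x^2f_m$ and $-y^2f_m$ stays within range, so that the coefficient comparison is valid uniformly.
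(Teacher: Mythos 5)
Your proposal is correct and takes essentially the same route as the paper's own proof: the paper likewise uses the scalar relations $b_0=(m_2-d-2)d_0$ and $e=(m_2-2d-3)d_0$ to reduce to one identity among the $f$'s and one among the $g$'s, expands $(x^2-y^2)f_m$ and $(x^2-y^2)g_m$, reindexes the $y^2$-part, and collapses each coefficient to an elementary polynomial identity (your $f$-side identity in $a,k,d$ and your treatment of the boundary monomials check out). The only cosmetic differences are your substitution $a=1+m_2-d$, $k=\frac{d+1}{2}-i$ with explicit double-Pochhammer recursions, where the paper computes directly in $m_2,d,i$, and the $g$-side computation, which you correctly describe but leave sketched and which the paper carries out in full with the analogous identity $(m_2-d-2)(-1+m_2-d)+(d+2-m_2)(d+2)-2i(2d+3-m_2)=(d+2-m_2-2i)(2d+3-m_2)$.
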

\begin{proof}
Note that $m'$ and $m''$ are balanced for a balanced multiplicity $m$, 
and that we have 
$b_0 = (m_2-d-2)d_0$ and $e = (m_2-2d-3)d_0$ by definition.

By \Cref{def:main},
\begin{align*}
f_{m}&=
\sum_{i=0}^{\frac{d-1}{2}} (-1)^i 
\frac{
\dps{1+m_{2}-d}
{\frac{d-1}{2}-i}
}
{(d-2i)!(2i)!!}
x^{d-2i}y^{2i},\\
f_{m'}&=
\sum_{i=0}^{\frac{d+1}{2}} (-1)^i 
\frac{
\dps{-1+m_{2}-d}
{\frac{d+1}{2}-i}
}
{(d+2-2i)!(2i)!!}
x^{d+2-2i}y^{2i},\\
f_{m''}&=
\sum_{i=0}^{\frac{d+1}{2}} (-1)^i 
\frac{
\dps{1+m_{2}-d}
{\frac{d+1}{2}-i}
}
{(d+2-2i)!(2i)!!}
x^{d+2-2i}y^{2i}.
\end{align*}
It follows that
\begin{align*}
&(x^2-y^2)f_{m}\\
&=
\sum_{i=0}^{\frac{d-1}{2}} (-1)^i 
\frac{
\dps{1+m_{2}-d}
{\frac{d-1}{2}-i}
}
{(d-2i)!(2i)!!}
x^{d-2i}y^{2i}
(x^2-y^2)\\
&=
\sum_{i=0}^{\frac{d-1}{2}} (-1)^i 
\frac{
\dps{1+m_{2}-d}
{\frac{d-1}{2}-i}
}
{(d-2i)!(2i)!!}
x^{d+2-2i}y^{2i}\\
&-
\sum_{i=0}^{\frac{d-1}{2}} (-1)^i 
\frac{
\dps{1+m_{2}-d}
{\frac{d-1}{2}-i}
}
{(d-2i)!(2i)!!}
x^{d-2i}y^{2i+2}\\
&=
\sum_{i=0}^{\frac{d-1}{2}} (-1)^i 
\frac{
\dps{1+m_{2}-d}
{\frac{d-1}{2}-i}
}
{(d-2i)!(2i)!!}
x^{d+2-2i}y^{2i}\\
&-
\sum_{j=1}^{\frac{d+1}{2}} (-1)^{j-1} 
\frac{
\dps{1+m_{2}-d}
{\frac{d-1}{2}-j+1}
}
{(d-2j+2)!(2j-2)!!}
x^{d+2-2j}y^{2j}\\
&=
\sum_{i=1}^{\frac{d-1}{2}} (-1)^i 
\left(
\frac{
\dps{1+m_{2}-d}
{\frac{d-1}{2}-i}
}
{(d-2i)!(2i)!!}
+
\frac{
\dps{1+m_{2}-d}
{\frac{d-1}{2}-i+1}
}
{(d-2i+2)!(2i-2)!!}
\right)
x^{d+2-2i}y^{2i}\\
&+
\frac{
\dps{1+m_2-d}
{\frac{d-1}{2}}
}
{d!}
x^{d+2}
+
(-1)^{\frac{d+1}{2}}
\frac{1}
{(d-1)!!}
xy^{d+1}\\
&=
\sum_{i=1}^{\frac{d-1}{2}}
(-1)^i 
\frac{
\dps{1+m_{2}-d}
{\frac{d-1}{2}-i}
}
{(d+2-2i)!(2i)!!}
(
(d+2-2i)(d+1-2i)
+
2i(m_2-2i)
)
x^{d+2-2i}y^{2i}\\
&+
\frac{
\dps{1+m_2-d}
{\frac{d-1}{2}}
}
{d!}
x^{d+2}
+
(-1)^{\frac{d+1}{2}}
\frac{1}
{(d-1)!!}
xy^{d+1}\\
&=
\sum_{i=0}^{\frac{d+1}{2}}
(-1)^i
\frac{
\dps{1+m_{2}-d}
{\frac{d-1}{2}-i}
}
{(d+2-2i)!(2i)!!}
(
(d+2)(d+1)
+2i(m_2-2d-3)
)
x^{d+2-2i}y^{2i}.
\end{align*}
Since
\begin{align*}
&(d+2-m_2)(-1+m_2-d)
+
(d+2)(d+1)
+2i(m_2-2d-3)\\
&=
(d+2)m_2+m_2(d+1)-m_2^2
+2i(m_2-2d-3)\\
&=
-m_2^2+(2d+3)m_2+2i(m_2-2d-3)\\
&=
(m_2-2i)(-m_2+2d+3),
\end{align*}
we have
\begin{align*}
&(m_2-d-2)f_{m'}
-(x^2-y^2)f_{m}\\
&=
(m_2-d-2)\cdot
\sum_{i=0}^{\frac{d+1}{2}} (-1)^i 
\frac{
\dps{-1+m_{2}-d}
{\frac{d+1}{2}-i}
}
{(d+2-2i)!(2i)!!}
x^{d+2-2i}y^{2i}\\
&-
\sum_{i=0}^{\frac{d+1}{2}}
(-1)^i
\frac{
\dps{1+m_{2}-d}
{\frac{d-1}{2}-i}
}
{(d+2-2i)!(2i)!!}
(
(d+2)(d+1)+2i(m_2-2d-3)
)
x^{d+2-2i}y^{2i}\\
&=
-
\sum_{i=0}^{\frac{d+1}{2}}
(-1)^i
\frac{
\dps{1+m_2-d}{\frac{d-1}{2}-i}
}
{(d+2-2i)!(2i)!!}
(
(d+2-m_2)(-1+m_2-d)
+
(d+2)(d+1)\\
&+
2i(m_2-2d-3)
)
x^{d+2-2i}y^{2i}\\
&=
-
\sum_{i=0}^{\frac{d+1}{2}}
(-1)^i
\frac{
\dps{1+m_2-d}{\frac{d-1}{2}-i}
}
{(d+2-2i)!(2i)!!}
(m_2-2i)(-m_2+2d+3)
x^{d+2-2i}y^{2i}\\
&=
(m_2-2d-3)\cdot
\sum_{i=0}^{\frac{d+1}{2}}(-1)^{i}
\frac{\dps{1+m_{2}-d}
{\frac{d+1}{2}-i}}
{(d+2-2i)!(2i)!!}
x^{d+2-2i}y^{2i}\\
&=
(m_2-2d-3)f_{m''}.
\end{align*}
Therefore, 
\begin{align*}
b_0f_{m'}-d_0(x^2-y^2)f_{m}
=
d_0((m_2-d-2)f_{m'}-(x^2-y^2)f_{m})
=ef_{m''}.
\end{align*}

On the other hand, by \Cref{def:main},
\begin{align*}
g_{m}&=
\sum_{i=0}^{\frac{d-m_{2}}{2}} (-1)^{i+m_{3}} 
\frac{
\dps{1+m_{2}-d}
{\frac{d-m_{2}}{2}-i}
}
{(d-m_{2}-2i)!!(d-2i)!!(2i)!!}
x^{2i}y^{d-2i}\\
g_{m'}&=
\sum_{i=0}^{\frac{d-m_{2}}{2}+1} (-1)^{i+m_{3}+2} 
\frac{
\dps{-1+m_{2}-d}
{\frac{d-m_{2}}{2}-i+1}
}
{(d+2-m_{2}-2i)!!(d+2-2i)!!(2i)!!}
x^{2i}y^{d+2-2i}\\
g_{m''}&=
\sum_{i=0}^{\frac{d-m_{2}}{2}} (-1)^{i+m_{3}+1} 
\frac{
\dps{1+m_{2}-d}
{\frac{d-m_{2}}{2}-i}
}
{(d-m_{2}-2i)!!(d+2-2i)!!(2i)!!}
x^{2i}y^{d+2-2i}.
\end{align*}
It follows that
\begin{align*}
&(x^2-y^2)g_{m}
=
\sum_{i=0}^{\frac{d-m_{2}}{2}} (-1)^{i+m_{3}} 
\frac{
\dps{1+m_{2}-d}
{\frac{d-m_{2}}{2}-i}
}
{(d-m_{2}-2i)!!(d-2i)!!(2i)!!}
x^{2i}y^{d-2i}
(-y^2+x^2)\\
&=
-
\sum_{i=0}^{\frac{d-m_{2}}{2}} (-1)^{i+m_{3}} 
\frac{
\dps{1+m_{2}-d}
{\frac{d-m_{2}}{2}-i}
}
{(d-m_{2}-2i)!!(d-2i)!!(2i)!!}
x^{2i}y^{d+2-2i}\\
&+
\sum_{i=0}^{\frac{d-m_{2}}{2}} (-1)^{i+m_{3}} 
\frac{
\dps{1+m_{2}-d}
{\frac{d-m_{2}}{2}-i}
}
{(d-m_{2}-2i)!!(d-2i)!!(2i)!!}
x^{2i+2}y^{d-2i}\\
&=
-
\sum_{i=0}^{\frac{d-m_{2}}{2}} (-1)^{i+m_{3}} 
\frac{
\dps{1+m_{2}-d}
{\frac{d-m_{2}}{2}-i}
}
{(d-m_{2}-2i)!!(d-2i)!!(2i)!!}
x^{2i}y^{d+2-2i}\\
&+
\sum_{j=1}^{\frac{d-m_{2}}{2}+1} (-1)^{j-1+m_{3}} 
\frac{
\dps{1+m_{2}-d}
{\frac{d-m_{2}}{2}-j+1}
}
{(d-m_{2}-2j+2)!!(d-2j+2)!!(2j-2)!!}
x^{2j}y^{d-2j+2}\\
&=
-
\sum_{i=1}^{\frac{d-m_{2}}{2}} (-1)^{i+m_{3}} 
\biggl(
\frac{
\dps{1+m_{2}-d}
{\frac{d-m_{2}}{2}-i}
}
{(d-m_{2}-2i)!!(d-2i)!!(2i)!!}\\
&+
\frac{
\dps{1+m_{2}-d}
{\frac{d-m_{2}}{2}-i+1}
}
{(d-m_{2}-2i+2)!!(d-2i+2)!!(2i-2)!!}
\biggr)
x^{2i}y^{d-2i+2}\\
&-
(-1)^{m_3}
\frac{
\dps{1+m_2-d}{\frac{d-m_2}{2}}
}
{(d-m_2)!!d!!}
y^{d+2}
+
(-1)^{\frac{d-m_{2}}{2}+m_{3}} 
\frac{1}
{m_2!!(d-m_2)!!}
x^{d-m_2+2}y^{m_2}\\
&=
-
\sum_{i=1}^{\frac{d-m_{2}}{2}}
\frac{
(-1)^{i+m_{3}}
\dps{1+m_{2}-d}
{\frac{d-m_{2}}{2}-i}
}
{(d-m_{2}-2i+2)!!(d-2i+2)!!(2i)!!}
(
(d-m_{2}-2i+2)(d-2i+2)\\
&+
2i(1-2i)
)
x^{2i}y^{d-2i+2}
-
(-1)^{m_3}
\frac{
\dps{1+m_2-d}{\frac{d-m_2}{2}}
}
{(d-m_2)!!d!!}
y^{d+2}\\
&+
(-1)^{\frac{d-m_{2}}{2}+m_{3}} 
\frac{1}
{m_2!!(d-m_2)!!}
x^{d-m_2+2}y^{m_2}\\
&=
-
\sum_{i=0}^{\frac{d-m_{2}+1}{2}}
\frac{
(-1)^{i+m_{3}}
\dps{1+m_{2}-d}
{\frac{d-m_{2}}{2}-i}
}
{(d+2-m_{2}-2i)!!(d+2-2i)!!(2i)!!}
(
(d+2-m_2)(d+2)\\
&-
2i(2d+3-m_{2})
)
x^{2i}y^{d-2i+2}.
\end{align*}
Since
\begin{align*}
&(m_2-d-2)
(-1+m_{2}-d)
+(d+2-m_2)(d+2)
-2i(2d+3-m_{2})\\
&=
(d+2-m_2)(2d+3-m_2)
-2i(2d+3-m_2)\\
&=
(d+2-m_2-2i)(2d+3-m_2),
\end{align*}
we have
\begin{align*}
&(m_2-d-2)g_{m'}
-(x^2-y^2)g_{m}\\
&=
(m_2-d-2)\cdot
\sum_{i=0}^{\frac{d-m_{2}}{2}+1} (-1)^{i+m_{3}} 
\frac{
\dps{-1+m_{2}-d}
{\frac{d-m_{2}}{2}-i+1}
}
{(d+2-m_{2}-2i)!!(d+2-2i)!!(2i)!!}
x^{2i}y^{d+2-2i}\\
&+
\sum_{i=0}^{\frac{d-m_{2}}{2}+1}
(-1)^{i+m_{3}}
\frac{
\dps{1+m_{2}-d}
{\frac{d-m_{2}}{2}-i}
}
{(d+2-m_{2}-2i)!!(d+2-2i)!!(2i)!!}
(
(d+2-m_2)(d+2)\\
&-
2i(2d+3-m_{2})
)
x^{2i}y^{d-2i+2}\\
&=
\sum_{i=0}^{\frac{d-m_2}{2}+1}
\frac{(-1)^{i+m_3}
\dps{1+m_{2}-d}
{\frac{d-m_{2}}{2}-i}
}
{(d+2-m_{2}-2i)!!(d+2-2i)!!(2i)!!}
(
(m_2-d-2)
(-1+m_{2}-d)\\
&+
(d+2-m_2)(d+2)
-
2i(2d+3-m_{2})
)
x^{2i}y^{d+2-2i}\\
&=
\sum_{i=0}^{\frac{d-m_2}{2}}(-1)^{i+m_3}
\frac{(2d+3-m_2)
\dps{1+m_{2}-d}
{\frac{d-m_{2}}{2}-i}
}
{(d-m_2-2i)!!(d+2-2i)!!(2i)!!}
x^{2i}y^{d+2-2i}\\
&=
(m_2-2d-3)\cdot
\sum_{i=0}^{\frac{d-m_2}{2}}(-1)^{i+m_3+1}
\frac{
\dps{1+m_{2}-d}
{\frac{d-m_{2}}{2}-i}
}
{(d-m_2-2i)!!(d+2-2i)!!(2i)!!}
x^{2i}y^{d+2-2i}\\
&=(m_2-2d-3)g_{m''}.
\end{align*}
Therefore, 
\begin{align*}
b_0g_{m'}-d_0(x^2-y^2)g_{m}
=
d_0((m_2-d-2)g_{m'}-(x^2-y^2)g_{m})
=eg_{m''}.
\end{align*}
Hence
\begin{align*}
b_0\theta_{m'}-d_0(x^2-y^2)\theta_{m}
&=b_0(f_{m'}\der_{x}-g_{m'}\der_{y})
-d_0(x^2-y^2)(f_{m}\der_{x}-g_{m}\der_{y})\\
&=(b_0f_{m'}-d_0(x^2-y^2)f_{m})\der_{x}
-(b_0g_{m'}-d_0(x^2-y^2)g_{m})\der_{y}\\
&=
ef_{m''}\der_{x}-eg_{m''}\der_{y}\\
&=e\theta_{m''}.
\end{align*}
\end{proof}

\begin{lemma}
  \label{lem:C}
Let $m=(1,m_2,m_3,m_3)$ satisfy $m_2 \in 2\ZZ+1$ and $|m|\in 4\ZZ$.
Let $m'=(1,m_2,m_3+2,m_3+2)$ and $m''=(1,m_2+2,m_3+1,m_3+1)$.
If $m$ is balanced, $\theta _{m} \in D(m)$ and $\theta _{m'} \in D(m')$, 
then we have $\theta_{m''} \in D(m'')$.
\end{lemma}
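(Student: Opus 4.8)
The plan is to verify directly the four defining conditions of $D(m'')$ from \Cref{def:D(A)}, namely $\theta_{m''}(\alpha_i)\in\alpha_i^{m''_i}S$ for $i=1,2,3,4$, where $m''=(1,m_2+2,m_3+1,m_3+1)$. Two of these come for free: by \Cref{def:main} the component $f_{m''}=\theta_{m''}(x)$ carries an explicit factor $x^{m''_1}=x$ and the component $-g_{m''}=\theta_{m''}(y)$ carries an explicit factor $y^{m''_2}=y^{m_2+2}$, so $\theta_{m''}(\alpha_1)\in\alpha_1 S$ and $\theta_{m''}(\alpha_2)\in\alpha_2^{m_2+2}S$ hold for $\theta_{m''}$ alone, without using the hypotheses on $\theta_m$ and $\theta_{m'}$. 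Thus the whole content lies in the two conditions at $\alpha_3=x-y$ and $\alpha_4=x+y$.

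For these I would invoke \Cref{lem:B}, which expresses $e\,\theta_{m''}=b_0\theta_{m'}-d_0\alpha_3\alpha_4\,\theta_{m}$. Applying both sides to $\alpha_3$ gives
\begin{align*}
e\,\theta_{m''}(\alpha_3)=b_0\,\theta_{m'}(\alpha_3)-d_0\,\alpha_3\alpha_4\,\theta_{m}(\alpha_3).
\end{align*}
Now $\theta_{m'}\in D(m')$ with $m'_3=m_3+2$ yields $\theta_{m'}(\alpha_3)\in\alpha_3^{m_3+2}S\subseteq\alpha_3^{m_3+1}S$, while $\theta_{m}\in D(m)$ yields $\theta_{m}(\alpha_3)\in\alpha_3^{m_3}S$, so $\alpha_3\alpha_4\,\theta_{m}(\alpha_3)\in\alpha_3^{m_3+1}S$. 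Hence $e\,\theta_{m''}(\alpha_3)\in\alpha_3^{m_3+1}S=\alpha_3^{m''_3}S$, and the identical computation at $\alpha_4$ (using that $\alpha_3\alpha_4$ also carries a factor $\alpha_4$) gives $e\,\theta_{m''}(\alpha_4)\in\alpha_4^{m''_4}S$.

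It remains to cancel the scalar $e$, and this is the one place where care is needed. Since $\alpha_i^{m''_i}S$ is an ideal, dividing by $e$ is legitimate provided $e\in\KK^{\times}$. From \Cref{lem:B}, $e=(-1)^{\frac{d-m_2}{2}+m_3+1}\frac{m_2-2d-3}{m_2!!(d+2-m_2)!!}$, and the balancedness of $m$ gives $2m_2\le|m|-1=2d+1$, hence $m_2\le d$ and $m_2-2d-3\le-(d+3)<0$; the double factorials in the denominator are of positive integers, so $e\ne0$. Therefore $\theta_{m''}(\alpha_i)\in\alpha_i^{m''_i}S$ for all $i$, i.e.\ $\theta_{m''}\in D(m'')$. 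The real work has already been done in \Cref{lem:B}; once that identity is available, \Cref{lem:C} is just a matter of tracking divisibilities, and the only genuine obstacle is the verification $e\ne0$, which the balanced hypothesis supplies.
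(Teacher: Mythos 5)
Your proof is correct and follows essentially the same route as the paper's: both rest on the identity $e\theta_{m''}=b_0\theta_{m'}-d_0\alpha_3\alpha_4\theta_{m}$ from \Cref{lem:B}, the divisibilities supplied by $\theta_{m}\in D(m)$ and $\theta_{m'}\in D(m')$, and the explicit factor $y^{m_2+2}$ of $g_{m''}$ from \Cref{def:main}. The only differences are organizational — the paper passes through the intermediate module $D((1,m_2,m_3+1,m_3+1))$ where you verify the four conditions one by one — and your explicit check that $e\neq 0$ (valid, since balancedness gives $m_2\le d$, hence $m_2-2d-3<0$) makes precise a cancellation the paper's proof leaves implicit.
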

\begin{proof}
Since 
$\theta_{m} \in D(m)$, 
$\alpha_3\alpha_4\theta_{m} \in D((1,m_2,m_3+1,m_3+1))$. 
Moreover, since
$\theta_{m'} \in D(m') \subset D((1,m_2,m_3+1,m_3+1))$, 
$b_0\theta_{m'}-d_0\alpha_3\alpha_4\theta_{m} \in D((1,m_2,m_3+1,m_3+1))$.
Hence, by \Cref{lem:B},
$\theta_{m''} \in D((1,m_2,m_3+1,m_3+1))$.
By \Cref{def:main},
$y^{m_2+2}$ 
divides 
$\theta_{m''}(y) = -g_{m''}$.
Therefore
$\theta_{m''} \in D(m'')$.
\end{proof}

The following is a corollary to \Cref{lem:C}.

\begin{cor}
  \label{cor:C'}
Let $\mu=(1,\mu_2-2,\mu_3-1,\mu_3-1)$ satisfy $\mu_2 \in 2\ZZ+1$, $|\mu|\in 4\ZZ$ and 
$\mu_2-2,\mu_3-1\geq1$.
Let $\mu'=(1,\mu_2-2,\mu_3+1,\mu_3+1)$ and $\mu''=(1,\mu_2,\mu_3,\mu_3)$.
If $\mu$ is balanced, $\theta _{\mu} \in D(\mu)$ and $\theta _{\mu'} \in D(\mu')$, 
then we have $\theta_{\mu''} \in D(\mu'')$.
\end{cor}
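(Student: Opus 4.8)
The plan is to recognize that \Cref{cor:C'} is simply \Cref{lem:C} after a relabeling of indices, so the task reduces to matching the two families of multiplicities. I would set $m=\mu$ in the notation of \Cref{lem:C}, which amounts to taking $m_2=\mu_2-2$ and $m_3=\mu_3-1$. Under this substitution one checks directly that the auxiliary multiplicities of \Cref{lem:C} become $m'=(1,m_2,m_3+2,m_3+2)=(1,\mu_2-2,\mu_3+1,\mu_3+1)=\mu'$ and $m''=(1,m_2+2,m_3+1,m_3+1)=(1,\mu_2,\mu_3,\mu_3)=\mu''$, so the three multiplicities $m,m',m''$ coincide exactly with $\mu,\mu',\mu''$.

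Next I would verify that the hypotheses of \Cref{lem:C} are met. Because $\mu_2\in2\ZZ+1$, the value $m_2=\mu_2-2$ is again odd; moreover $|m|=|\mu|\in4\ZZ$, and $m$ is balanced precisely when $\mu$ is. The two membership assumptions $\theta_\mu\in D(\mu)$ and $\theta_{\mu'}\in D(\mu')$ are exactly $\theta_m\in D(m)$ and $\theta_{m'}\in D(m')$. The side conditions $\mu_2-2\geq1$ and $\mu_3-1\geq1$ only ensure that every entry of $\mu$, and hence of $\mu'$ and $\mu''$, is positive, so that all multiplicities are well defined. Applying \Cref{lem:C} then gives $\theta_{m''}\in D(m'')$, which is precisely $\theta_{\mu''}\in D(\mu'')$, as desired.

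The only point requiring care is the index bookkeeping: one must confirm that incrementing the $m_3$-entry of $\mu$ by two reproduces $\mu'$, while incrementing the $m_2$-entry by two and the $m_3$-entry by one reproduces $\mu''$. Beyond this arithmetic there is no genuine obstacle, since the corollary is exactly the inductive step furnished by \Cref{lem:C} transported to the more symmetric parametrization $(1,\mu_2,\mu_3,\mu_3)$ that is convenient for the double induction in this section.
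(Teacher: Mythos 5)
Your proposal is correct and matches the paper exactly: the paper states \Cref{cor:C'} as an immediate consequence of \Cref{lem:C} with no further argument, and your relabeling $m_2=\mu_2-2$, $m_3=\mu_3-1$, together with the checks that $m'=\mu'$, $m''=\mu''$, that $m_2$ stays odd, that $|m|=|\mu|\in4\ZZ$, and that balancedness transfers, is precisely the intended verification. Nothing is missing.
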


The following lemma is not only the conclusion of the induction on $m_2$, 
but also one of base cases for the induction on $m_1$.

\begin{lemma}
  \label{lem:G}
Let $m=(1,m_2,m_3,m_3)$ satisfy
$m_2\in 2\ZZ+1$, $|m|\in 4\ZZ$.
If $m$ is balanced, then
$\theta_{m}\in D(m)$.
\end{lemma}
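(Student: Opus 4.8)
The plan is to prove the statement by induction on the odd integer $m_2$, with $m_1=1$ held fixed and the assertion quantified over all admissible $m_3$ at each stage; that is, the induction hypothesis at level $m_2$ reads ``$\theta_m\in D(m)$ for every balanced $m=(1,m_2,m_3,m_3)$ with $|m|\in 4\ZZ$.'' The base case will be $m_2=1$, and the inductive step will be furnished by \Cref{cor:C'} (equivalently by \Cref{lem:B,lem:C}), which reduces membership at second multiplicity $m_2$ to membership at $m_2-2$.

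For the base case, observe that when $m_2=1$ the condition $|m|=2+2m_3\in 4\ZZ$ forces $m_3$ to be odd, and any such multiplicity is automatically balanced. Hence $m=(1,1,m_3,m_3)$ with $m_3\in 2\ZZ+1$, and \Cref{lem:A} gives $\theta_m\in D(m)$ at once.

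For the inductive step, fix an odd $\mu_2\geq 3$, assume the result for all smaller odd second multiplicities, and let $\mu''=(1,\mu_2,\mu_3,\mu_3)$ be balanced with $|\mu''|\in 4\ZZ$. I would set $\mu=(1,\mu_2-2,\mu_3-1,\mu_3-1)$ and $\mu'=(1,\mu_2-2,\mu_3+1,\mu_3+1)$, which are precisely the multiplicities appearing in \Cref{cor:C'}. The key verification is that both $\mu$ and $\mu'$ satisfy the induction hypothesis: their second multiplicity $\mu_2-2$ is odd and strictly smaller than $\mu_2$; one computes $|\mu|=|\mu''|-4\in 4\ZZ$ and $|\mu'|=|\mu''|\in 4\ZZ$; and both are balanced, the only nontrivial balancedness inequality reducing in each case to $\mu_2\leq 2\mu_3$, which is exactly the balancedness of $\mu''$ (together with $\mu_2\geq 3$, which also yields $\mu_3\geq 2$, so that the positivity constraints $\mu_3-1\geq 1$ and $\mu_2-2\geq 1$ required by \Cref{cor:C'} hold). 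Granting these, the induction hypothesis gives $\theta_\mu\in D(\mu)$ and $\theta_{\mu'}\in D(\mu')$, and \Cref{cor:C'} then delivers $\theta_{\mu''}\in D(\mu'')$, closing the induction.

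The substantive analytic work---the exact three-term relation $e\theta_{\mu''}=b_0\theta_{\mu'}-d_0\alpha_3\alpha_4\theta_\mu$ relating the three candidate derivations---is already isolated in \Cref{lem:B}, so the only remaining difficulty is combinatorial bookkeeping: confirming that the parity conditions ($m_2$ odd, $|m|\in 4\ZZ$) and all balancedness inequalities are preserved under the simultaneous reductions $m_2\mapsto m_2-2$ and $m_3\mapsto m_3\pm 1$. The point to watch is the decrease $m_3\mapsto m_3-1$ in $\mu$, since this is where balancedness could in principle fail; I expect this to be the main (though elementary) obstacle, and it is resolved precisely by invoking $\mu_2\leq 2\mu_3$, i.e.\ the hypothesis that $\mu''$ is balanced.
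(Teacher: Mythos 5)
Your proof is correct and takes essentially the same route as the paper's: induction on the odd second multiplicity $m_2$ with base case $m_2=1$ handled by \Cref{lem:A}, and the inductive step supplied by \Cref{cor:C'} applied to $\mu=(1,\mu_2-2,\mu_3-1,\mu_3-1)$ and $\mu'=(1,\mu_2-2,\mu_3+1,\mu_3+1)$. Your explicit checks of the parity, positivity, and balancedness side conditions (in particular that the nontrivial balancedness inequality reduces to $\mu_2\leq 2\mu_3$, i.e.\ balancedness of $\mu''$, and that $\mu_2\geq 3$ forces $\mu_3\geq 2$) are exactly the bookkeeping the paper performs, only stated there more tersely in the variables $m''=(1,2k+1,l,l)$.
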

\begin{proof}
We show the lemma by the induction on $m_2$.

First, we consider the base case.
As \Cref{lem:A}, we have already shown the case where $m_2=1$.

Next, we consider the induction step.
Assume that
$\theta_{\mu} \in D(\mu)$ 
for all balanced multiplicity $\mu=(1,\mu_2,\mu_3,\mu_3)$ 
satisfying $\mu_2 \leq 2k-1$, $\mu_2 \in 2\ZZ+1$, $|\mu| \in 4\ZZ$.
Let $m''=(1,2k+1,l,l)$ be a multiplicity such that
$k,l\in \ZZ_{>0}$, $|m''|=2(k+l+1)\in 4\ZZ$ and $m''$ is balanced.
By assumptions, we have $k+l\in 2\ZZ+1$ and $2k+1 \leq 2l$, especially $2 \leq l$.
Then, we can consider two balanced multiplicities
$m=(1,2k-1,l-1,l-1)$ and $m'=(1,2k-1,l+1,l+1)$.
By induction hypothesis, applying \Cref{cor:C'}
to $m$ and $m'$, $\theta_{m''} \in D(m'')$.
\end{proof}

\Cref{lem:D,lem:E} are essential for the induction step on $m_1$.

\begin{lemma}
  \label{lem:D}
Let $m=(m_1,m_2,m_3,m_3)$ satisfy $m_1$,$m_2 \in 2\ZZ+1$ and $|m|\in 4\ZZ$. Moreover, 
let $m'=(m_1,m_2+4,m_3,m_3)$ and $m''=(m_1+2,m_2+2,m_3,m_3)$.
If $m$ is balanced, then
\begin{align*}
e\theta_{m''}&=
b_0\theta_{m'}
-d_0y^2\theta_{m}\\
&=
b_0\theta_{m'}
-d_0\alpha_2^2\theta_{m},
\end{align*}
where
\begin{align*}
b_0 &= (-1)^{\frac{d-m_1}{2}}\frac{1}{m_1 !!(d-m_1)!!},\\
d_0 &= (-1)^{\frac{d-m_1}{2}+1}\frac{1}{m_1 !!(d+2-m_1)!!},\\
e   &= (-1)^{\frac{d-m_1}{2}}\frac{m_2+2}{m_1!!(d+2-m_1)!!} \ \ \text{for}\\ 
d   &= \frac{|m|}{2}-1.
\end{align*}
\end{lemma}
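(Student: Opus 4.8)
The plan is to follow the same strategy as in the proof of \Cref{lem:B}, replacing the factor $\alpha_3\alpha_4=x^2-y^2$ used there by $\alpha_2^2=y^2$. The first step is to record the two relations $b_0=(m_1-d-2)d_0$ and $e=-(m_2+2)d_0$ among the stated constants (both are immediate from $(d+2-m_1)!!=(d+2-m_1)(d-m_1)!!$), so that after cancelling $d_0$ the assertion becomes equivalent to the pair of componentwise identities
\begin{align*}
(m_1-d-2)f_{m'}-y^2f_{m}&=-(m_2+2)f_{m''},\\
(m_1-d-2)g_{m'}-y^2g_{m}&=-(m_2+2)g_{m''},
\end{align*}
since $\theta_\mu=f_\mu\der_{x}-g_\mu\der_{y}$ for every multiplicity $\mu$. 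Throughout I use that $|m'|=|m''|=|m|+4$, so the degree parameter of both $m'$ and $m''$ is $d+2$, and that the Pochhammer argument $m_1+m_2-d$ is replaced by $m_1+m_2-d+2$ for both.

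For the $f$-identity I would expand $f_{m},f_{m'},f_{m''}$ by \Cref{def:main}, all as sums over the monomials $x^{d+2-2i}y^{2i}$; the only reindexing needed is in $y^2f_{m}$, where setting $j=i+1$ moves its coefficients onto the same monomials. Using the base-shift relations $\dps{a+2}{n}=(a+2n)\dps{a+2}{n-1}$ and $\dps{a}{n}=a\,\dps{a+2}{n-1}$ for the double Pochhammer symbol, I normalize all three coefficients to the common factor $\dps{m_1+m_2-d+2}{\frac{d-m_1}{2}-i}$ and a shared double-factorial denominator. The comparison of the coefficient of $x^{d+2-2i}y^{2i}$ then collapses to the elementary polynomial identity in $i$
\begin{equation*}
(m_1-d-2)(m_2+2-2i)+2(m_1+m_2-d)\,i=(m_2+2)(m_1-d-2+2i),
\end{equation*}
which holds since both sides have $i$-coefficient $2(m_2+2)$ and constant term $(m_1-d-2)(m_2+2)$. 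As in \Cref{lem:B}, the extreme indices must be examined: the top term at $i=\frac{d-m_1}{2}+1$ (present in $f_{m'}$ and $y^2f_m$ but not in $f_{m''}$) and the term $i=0$ (absent from $y^2f_m$). The $i=0$ case is automatic because the $y^2f_m$ coefficient carries a factor $i$, while the top index is a short direct check using $(0)!!=1$ and $m_1-d-2=-(d+2-m_1)$.

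The $g$-identity is handled the same way, but no reindexing is required: multiplying $g_{m}$ by $y^2$ only raises the power of $y$, leaving the prefactor $x^{2i}$ and the common sign $(-1)^{i+m_3}$ intact, so $g_{m},g_{m'},g_{m''}$ and $y^2g_{m}$ are already sums over $x^{2i}y^{d+2-2i}$. Normalizing to the common factor $\dps{m_1+m_2-d+2}{\frac{d-m_2}{2}-i}$ and the shared denominator, the coefficient comparison reduces to
\begin{equation*}
(m_1-d-2)(d-m_2-2i)-(m_1+m_2-d)(d+2-2i)=-(m_2+2)(m_1-2i),
\end{equation*}
again an identity in $i$ (its $i$-coefficient is $2(m_2+2)$ and its constant term is $-m_1(m_2+2)$). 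Feeding the two scalar identities back through $\theta_\mu=f_\mu\der_{x}-g_\mu\der_{y}$ gives $b_0\theta_{m'}-d_0y^2\theta_{m}=e\theta_{m''}$, which is the claim, and $y^2=\alpha_2^2$ yields the second displayed form.

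I expect the main obstacle to be purely the bookkeeping: tracking how the double factorials and the double Pochhammer symbols transform under the simultaneous shifts $d\mapsto d+2$ and $m_1+m_2-d\mapsto m_1+m_2-d+2$, and matching the boundary terms of the shifted sums. Once every coefficient is normalized to a single common factor, the content collapses to the two linear-in-$i$ identities above, which are immediate; it is precisely the ratio relations $b_0=(m_1-d-2)d_0$ and $e=-(m_2+2)d_0$ that make this three-term recurrence close.
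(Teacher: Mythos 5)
Your proposal is correct and follows essentially the same route as the paper's proof: the same preliminary relations $b_0=(m_1-d-2)d_0$ and $e=-(m_2+2)d_0$, the same componentwise reduction to identities for the $f$- and $g$-coefficients, and exactly the paper's two linear-in-$i$ identities (the paper writes your $f$-side right-hand side equivalently as $-(m_2+2)(d+2-m_1-2i)$). The only presentational difference is at the top boundary of the $g$-part: the paper splits into cases according to whether $m'$ is balanced (when $m_2=m_1+2m_3-2$ one has $d=m_2$ and $g_{m'}$ is the empty sum, hence zero), whereas your uniform coefficient identity absorbs this automatically because the $g_{m'}$ contribution enters with the factor $d-m_2-2i$, which vanishes at the top index.
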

\begin{proof}
Let $m'=(m_1,m_2+4,m_3,m_3)=(m'_1,m'_2,m'_3,m'_3)$.
Note that $m'$ is not balanced but satisfies $2m'_i \leq |m'|+2$ for all $i$
if and only if $m_2=m_1+2m_3-2$,
and that we have 
$b_0 = (m_1-d-2)d_0$ and $e = -(m_2+2)d_0$ by definition.

By \Cref{def:main},
\begin{align*}
f_{m}&=
\sum_{i=0}^{\frac{d-m_{1}}{2}} (-1)^i 
\frac{
\dps{m_{1}+m_{2}-d}
{\frac{d-m_{1}}{2}-i}
}
{(d-m_{1}-2i)!!(d-2i)!!(2i)!!}
x^{d-2i}y^{2i},\\
f_{m'}&=
\sum_{i=0}^{\frac{d+2-m_{1}}{2}} (-1)^i 
\frac{
\dps{m_{1}+m_{2}-d+2}
{\frac{d+2-m_{1}}{2}-i}
}
{(d+2-m_{1}-2i)!!(d+2-2i)!!(2i)!!}
x^{d+2-2i}y^{2i},\\
f_{m''}&=
\sum_{i=0}^{\frac{d-m_{1}}{2}} (-1)^i 
\frac{
\dps{m_{1}+m_{2}-d+2}
{\frac{d-m_{1}}{2}-i}
}
{(d-m_{1}-2i)!!(d+2-2i)!!(2i)!!}
x^{d+2-2i}y^{2i}.
\end{align*}
It follows that
\begin{align*}
y^2f_{m}
&=
\sum_{i=0}^{\frac{d-m_{1}}{2}} (-1)^i 
\frac{
\dps{m_{1}+m_{2}-d}
{\frac{d-m_{1}}{2}-i}
}
{(d-m_{1}-2i)!!(d-2i)!!(2i)!!}
x^{d-2i}y^{2i+2}\\
&=
\sum_{j=1}^{\frac{d-m_{1}}{2}+1} (-1)^{j-1} 
\frac{
\dps{m_{1}+m_{2}-d}
{\frac{d-m_{1}}{2}-j+1}
}
{(d+2-m_{1}-2j)!!(d+2-2j)!!(2j-2)!!}
x^{d+2-2j}y^{2j}.
\end{align*}
Since
\begin{align*}
&(m_1-d-2)(m_2+2-2i)
+
2i(m_1+m_2-d)\\
&=
(m_1-d-2)((m_2+2)-2i)
+
2i((m_2+2)-2+m_1-d)\\
&=
(m_2+2)((m_1-d-2)+2i)
-2i(m_1-d-2)+2i(-2+m_1-d)\\
&=
-(m_2+2)(d+2-m_1-2i),
\end{align*}
we have
\begin{align*}
&(m_1-d-2)f_{m'}
-y^2f_{m}\\
&=
(m_1-d-2)\cdot
\sum_{i=0}^{\frac{d-m_{1}}{2}+1} (-1)^i 
\frac{
\dps{m_{1}+m_{2}-d+2}
{\frac{d-m_{1}}{2}-i+1}
}
{(d+2-m_{1}-2i)!!(d+2-2i)!!(2i)!!}
x^{d+2-2i}y^{2i}\\
&+
\sum_{j=1}^{\frac{d-m_{1}}{2}+1} (-1)^{j} 
\frac{
\dps{m_{1}+m_{2}-d}
{\frac{d-m_{1}}{2}-j+1}
}
{(d+2-m_{1}-2j)!!(d+2-2j)!!(2j-2)!!}
x^{d+2-2j}y^{2j}\\
&=
\sum_{i=1}^{\frac{d-m_{1}}{2}+1} 
\frac{(-1)^i
\dps{m_{1}+m_{2}-d+2}
{\frac{d-m_{1}}{2}-i}
}
{(d+2-m_{1}-2i)!!(d+2-2i)!!(2i)!!}
(
(m_1-d-2)(m_2+2-2i)\\
&+
2i(m_1+m_2-d)
)
x^{d+2-2i}y^{2i}
+
(m_1-d-2)
\frac{
\dps{m_1+m_2-d+2}{\frac{d-m_1}{2}+1}
}
{(d+2-m_1)!!(d+2)!!}
x^{d+2}\\
&=
-\sum_{i=1}^{\frac{d-m_{1}}{2}} 
\frac{(-1)^i
\dps{m_{1}+m_{2}-d+2}
{\frac{d-m_{1}}{2}-i}
(m_2+2)(d+2-m_1-2i)}
{(d+2-m_{1}-2i)!!(d+2-2i)!!(2i)!!}
x^{d+2-2i}y^{2i}\\
&-
(d+2-m_1)
\frac{
\dps{m_1+m_2-d+2}{\frac{d-m_1}{2}}
((m_1+m_2-d+2)+2\cdot \frac{d-m_1}{2})
}
{(d+2-m_1)!!(d+2)!!}
x^{d+2}\\
&=
-(m_2+2)\cdot
\sum_{i=0}^{\frac{d-m_{1}}{2}} 
(-1)^i
\frac{
\dps{m_{1}+m_{2}-d+2}
{\frac{d-m_{1}}{2}-i}
}
{(d-m_{1}-2i)!!(d+2-2i)!!(2i)!!}
x^{d+2-2i}y^{2i}.
\end{align*}
Therefore
\begin{align*}
&b_0f_{m'}
-d_0y^2f_{m}\\
&=
d_0
((m_1-d-2)f_{m'}
-y^2f_{m})\\
&=
-d_0(m_2+2)\cdot
\sum_{i=0}^{\frac{d-m_{1}}{2}}
(-1)^i
\frac{\dps{m_{1}+m_{2}-d+2}
{\frac{d-m_{1}}{2}-i}}
{(d-m_1-2i)!!(d+2-2i)!!(2i)!!}
x^{d-2i+2}y^{2i}\\
&=
ef_{m''}.
\end{align*}

On the other hand, by \Cref{def:main}
\begin{align*}
g_{m}&=
\sum_{i=0}^{\frac{d-m_{2}}{2}} (-1)^{i+m_{3}} 
\frac{
\dps{m_{1}+m_{2}-d}
{\frac{d-m_{2}}{2}-i}
}
{(d-m_{2}-2i)!!(d-2i)!!(2i)!!}
x^{2i}y^{d-2i},\\
g_{m'}&=
\sum_{i=0}^{\frac{d-2-m_{2}}{2}} (-1)^{i+m_{3}} 
\frac{
\dps{m_{1}+m_{2}-d+2}
{\frac{d-2-m_{2}}{2}-i}
}
{(d-2-m_{2}-2i)!!(d+2-2i)!!(2i)!!}
x^{2i}y^{d+2-2i},\\
g_{m''}&=
\sum_{i=0}^{\frac{d-m_{2}}{2}} (-1)^{i+m_{3}} 
\frac{
\dps{m_{1}+m_{2}-d+2}
{\frac{d-m_{2}}{2}-i}
}
{(d-m_{2}-2i)!!(d+2-2i)!!(2i)!!}
x^{2i}y^{d+2-2i}.
\end{align*}
If $m'$ is balanced, 
since
\begin{align*}
&(m_1-d-2)(d-m_2-2i)-(d+2-2i)(m_1+m_2-d)\\
&=
(m_1-d-2)((d-2i+2)-(m_2+2))-(d-2i+2)((m_2+2)+(m_1-d-2))\\
&=
(m_2+2)
((d+2-m_1)-(d+2-2i))\\
&=
-(m_2+2)(m_1-2i),
\end{align*}
we have
\begin{align*}
&(m_1-d-2)g_{m'}
-y^2g_{m}\\
&=
(m_1-d-2)\cdot
\sum_{i=0}^{\frac{d-2-m_{2}}{2}} (-1)^{i+m_{3}} 
\frac{
\dps{m_{1}+m_{2}-d+2}
{\frac{d-2-m_{2}}{2}-i}
}
{(d-2-m_{2}-2i)!!(d+2-2i)!!(2i)!!}
x^{2i}y^{d+2-2i}\\
&-
\sum_{i=0}^{\frac{d-m_{2}}{2}} (-1)^{i+m_{3}} 
\frac{
\dps{m_{1}+m_{2}-d}
{\frac{d-m_{2}}{2}-i}
}
{(d-m_{2}-2i)!!(d-2i)!!(2i)!!}
x^{2i}y^{d+2-2i}\\
&=
\sum_{i=0}^{\frac{d-m_{2}}{2}-1} (-1)^{i+m_{3}} 
\frac{
\dps{m_{1}+m_{2}-d+2}
{\frac{d-m_{2}}{2}-i-1}
}
{(d-m_{2}-2i)!!(d+2-2i)!!(2i)!!}
(
(m_1-d-2)(d-m_2-2i)\\
&-
(d+2-2i)(m_1+m_2-d)
)
x^{2i}y^{d+2-2i}
-
\frac{
(-1)^{\frac{d-m_2}{2}+m_3}
}
{m_2!!(d-m_2)!!}
x^{d-m_2}y^{m_2+2}\\
&=
-
(m_2+2)\cdot
\sum_{i=0}^{\frac{d-m_{2}}{2}-1} (-1)^{i+m_{3}} 
\frac{
\dps{m_{1}+m_{2}-d+2}
{\frac{d-m_{2}}{2}-i}
}
{(d-m_{2}-2i)!!(d+2-2i)!!(2i)!!}
x^{2i}y^{d+2-2i}\\
&-
(m_2+2)
(-1)^{\frac{d-m_2}{2}+m_3}
\frac{1}
{(m_2+2)!!(d-m_2)!!}
x^{d-m_2}y^{m_2+2}\\
&=
-
(m_2+2)\cdot
\sum_{i=0}^{\frac{d-m_{2}}{2}} (-1)^{i+m_{3}} 
\frac{
\dps{m_{1}+m_{2}-d+2}
{\frac{d-m_{2}}{2}-i}
}
{(d-m_{2}-2i)!!(d+2-2i)!!(2i)!!}
x^{2i}y^{d+2-2i}.
\end{align*}
Therefore
\begin{align*}
&b_0g_{m'}
-d_0y^2g_{m}\\
&=
d_0(
(m_1-d-2)g_{m'}
-y^2g_{m})\\
&=
-d_0
(m_2+2)\cdot
\sum_{i=0}^{\frac{d-m_{2}}{2}} (-1)^{i+m_{3}} 
\frac{
\dps{m_{1}+m_{2}-d+2}
{\frac{d-m_{2}}{2}-i}
}
{(d-m_{2}-2i)!!(d+2-2i)!!(2i)!!}
x^{2i}y^{d+2-2i}\\
&=
eg_{m''}.
\end{align*}
If $m'$ is not balanced, 
since $d=m_2=m_1+2m_3-2$, 
\begin{align*}
&b_0g_{m'}
-d_0y^2g_{m}
=-d_0y^2g_{m}\\
&=
-d_0\cdot
\sum_{i=0}^{\frac{d-m_{2}}{2}} (-1)^{i+m_{3}} 
\frac{
\dps{m_{1}+m_{2}-d}
{\frac{d-m_{2}}{2}-i}
}
{(d-m_{2}-2i)!!(d-2i)!!(2i)!!}
x^{2i}y^{d+2-2i}\\
&=
-d_0
(-1)^{m_3}
\frac{1}
{d!!}
y^{d+2}\\
&=
-d_0(d+2)
(-1)^{m_3}
\frac{1}
{(d+2)!!}
y^{d+2}\\
&=eg_{m''}.
\end{align*}

Therefore, whether $m'$ is balanced or not,
\begin{align*}
b_0\theta_{m'}-d_0y^2\theta_{m}
&=b_0(f_{m'}\der_{x}-g_{m'}\der_{y})
-d_0y^2(f_{m}\der_{x}-g_{m}\der_{y})\\
&=(b_0f_{m'}-d_0y^2f_{m})\der_{x}
-(b_0g_{m'}-d_0y^2g_{m})\der_{y}\\
&=
ef_{m''}\der_{x}-eg_{m''}\der_{y}\\
&=e\theta_{m''}.
\end{align*}
\end{proof}

\begin{lemma}
  \label{lem:E}
Let $m=(m_1,m_2,m_3,m_3)$ satisfy $m_1$,$m_2 \in 2\ZZ+1$ and $|m|\in 4\ZZ$.
Let $m'=(m_1,m_2+4,m_3,m_3)$ and $m''=(m_1+2,m_2+2,m_3,m_3)$.
If $m$ is balanced, $\theta _{m} \in D(m)$ and $\theta _{m'} \in D(m')$, 
then we have $\theta_{m''} \in D(m'')$.
\end{lemma}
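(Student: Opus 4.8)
The plan is to mirror the proof of \Cref{lem:C}, replacing the role of \Cref{lem:B} by \Cref{lem:D} and adjusting the intermediate multiplicity accordingly. The key identity is the one supplied by \Cref{lem:D}, namely
\begin{align*}
e\theta_{m''}=b_0\theta_{m'}-d_0\alpha_2^2\theta_{m},
\end{align*}
so it suffices to show that the right-hand side lies in a derivation module containing $\theta_{m''}$, and then to sharpen the condition at $\alpha_1$.

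First I would introduce the intermediate multiplicity $\tilde m=(m_1,m_2+2,m_3,m_3)$. Since $\theta_m\in D(m)$, multiplying by $\alpha_2^2=y^2$ raises the multiplicity at $\alpha_2$ by two without affecting the other defining conditions, so $\alpha_2^2\theta_m\in D(\tilde m)$. On the other hand, because $m_2+4\geq m_2+2$, we have $D(m')\subset D(\tilde m)$, and hence $\theta_{m'}\in D(\tilde m)$. As $D(\tilde m)$ is an $S$-module, the linear combination $b_0\theta_{m'}-d_0\alpha_2^2\theta_m$ lies in $D(\tilde m)$; by \Cref{lem:D} this combination equals $e\theta_{m''}$, and since $e\neq 0$ (as $m_2+2>0$ and, $m$ being balanced, $d+2-m_1>0$), we conclude $\theta_{m''}\in D(\tilde m)$.

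It then remains to upgrade the condition at $\alpha_1$ from multiplicity $m_1$ to $m_1+2$, which is the only difference between $\tilde m$ and $m''$. By \Cref{def:main}, the polynomial $f_{m''}=\theta_{m''}(x)$ carries the explicit factor $x^{m_1+2}=\alpha_1^{m_1+2}$, so $\theta_{m''}(\alpha_1)\in\alpha_1^{m_1+2}S$. Together with $\theta_{m''}\in D(\tilde m)$, this establishes all four defining conditions of $D(m'')$, whence $\theta_{m''}\in D(m'')$.

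I expect no serious obstacle here: once \Cref{lem:D} is available, the argument is a direct module-theoretic manipulation paralleling \Cref{lem:C}. The only points requiring care are the correct choice of intermediate multiplicity $\tilde m=(m_1,m_2+2,m_3,m_3)$, which is dictated by the fact that it is $\alpha_2^2$ (rather than $\alpha_3\alpha_4$, as in \Cref{lem:B,lem:C}) that multiplies $\theta_m$, and the verification that $f_{m''}$ genuinely contains the factor $\alpha_1^{m_1+2}$, which is immediate from the first displayed form of $f_m$ in \Cref{def:main}.
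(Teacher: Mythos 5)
Your proposal is correct and follows essentially the same route as the paper's own proof: pass through the intermediate multiplicity $(m_1,m_2+2,m_3,m_3)$ using the identity of \Cref{lem:D}, then upgrade the condition at $\alpha_1$ via the explicit factor $x^{m_1+2}$ in $f_{m''}$ from \Cref{def:main}. Your explicit verification that $e\neq 0$ is a small but welcome addition that the paper leaves implicit.
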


\begin{proof}
Since
$\theta_{m} \in D(m)$, $\alpha_2^2\theta_{m} \in D((m_1,m_2+2,m_3,m_3))$.
Moreover, since
$\theta_{m'} \in D(m') \subset D((m_1,m_2+2,m_3,m_3))$, 
$b_0\theta_{m'}-d_0\alpha_2^2\theta_{m} \in D((m_1,m_2+2,m_3,m_3))$.
Hence, by \Cref{lem:D},
$\theta_{m''} \in D((m_1,m_2+2,m_3,m_3))$.
By \Cref{def:main},
$x^{m_1+2}$
divides
$\theta_{m''}(x) = f_{m''}.$ 
Therefore
$\theta_{m''} \in D(m'')$.
\end{proof}

The following is a corollary to \Cref{lem:E}.

\begin{cor}
  \label{cor:E'}
Let $\mu=(\mu_1-2,\mu_2-2,\mu_3,\mu_3)$ satisfy $\mu_1$,$\mu_2 \in 2\ZZ+1$, $|\mu|\in 4\ZZ$ 
and $\mu_1-2,\mu_2-2\geq1$.
Let $\mu'=(\mu_1-2,\mu_2+2,\mu_3,\mu_3)$ and $\mu''=(\mu_1,\mu_2,\mu_3,\mu_3)$.
If $\mu$ is balanced, $\theta _{\mu} \in D(\mu)$ and $\theta _{\mu'} \in D(\mu')$, 
then we have $\theta_{\mu''} \in D(\mu'')$.
\end{cor}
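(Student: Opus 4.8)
The plan is to obtain this corollary as an immediate instance of \Cref{lem:E} via a shift in the multiplicity. First I would set $m_1=\mu_1-2$, $m_2=\mu_2-2$, $m_3=\mu_3$, so that $m=(m_1,m_2,m_3,m_3)=\mu$ as tuples. Under this substitution the three multiplicities appearing in \Cref{lem:E} coincide exactly with those of the corollary: we have $m=\mu$, while $m'=(m_1,m_2+4,m_3,m_3)=(\mu_1-2,\mu_2+2,\mu_3,\mu_3)=\mu'$ and $m''=(m_1+2,m_2+2,m_3,m_3)=(\mu_1,\mu_2,\mu_3,\mu_3)=\mu''$.

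Next I would verify that the hypotheses of \Cref{lem:E} are met. Since $\mu_1,\mu_2\in2\ZZ+1$, subtracting $2$ preserves parity, so $m_1=\mu_1-2$ and $m_2=\mu_2-2$ again lie in $2\ZZ+1$; the assumption $\mu_1-2,\mu_2-2\geq1$ ensures these are genuine positive multiplicities, which is what allows $\theta_m$ to be defined through \Cref{def:main}. Because $m=\mu$ as tuples we get $|m|=|\mu|\in4\ZZ$ and $m$ balanced directly from the corollary's assumptions. The two derivation-membership inputs of the lemma, $\theta_m\in D(m)$ and $\theta_{m'}\in D(m')$, are precisely the corollary's hypotheses $\theta_\mu\in D(\mu)$ and $\theta_{\mu'}\in D(\mu')$ rewritten under the identification above.

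Applying \Cref{lem:E} then gives $\theta_{m''}\in D(m'')$, which is exactly the desired conclusion $\theta_{\mu''}\in D(\mu'')$. The entire argument is \emph{bookkeeping reindexing}: all genuine content—the explicit recursion expressing $\theta_{m''}$ as a combination of $\theta_{m'}$ and $\alpha_2^2\theta_m$, together with the divisibility argument ensuring the correct membership—already resides in \Cref{lem:D,lem:E}. Consequently I do not expect any real obstacle; the only points requiring care are confirming that the parity, positivity, and balancedness conditions transfer correctly under the shift by $2$, all of which are routine.
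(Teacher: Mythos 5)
Your proposal is correct and is exactly the route the paper intends: the paper states \Cref{cor:E'} as an immediate consequence of \Cref{lem:E}, obtained by the substitution $m=(\mu_1-2,\mu_2-2,\mu_3,\mu_3)$ so that $m'=\mu'$ and $m''=\mu''$. Your verification that parity, positivity, $|m|\in4\ZZ$, and balancedness transfer under the shift is the only content needed, and it is done correctly.
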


At the end of this section, we finish the proof of \Cref{thm:main} 
by showing \Cref{lem:I,lem:M}.

\begin{lemma}
  \label{lem:I}
Let $m=(m_1,m_2,m_3,m_3)$ satisfy
$m_1$,$m_2\in 2\ZZ+1$, $|m|\in 4\ZZ$.
If $m$ satisfies $2m_i \leq |m|+2$ for all $i$, then
$\theta_{m}\in D(m)$.
\end{lemma}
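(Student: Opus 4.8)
The plan is to obtain \Cref{lem:I} from the recurrences and base cases already assembled, splitting on whether $m$ is balanced. If $m$ is \emph{not} balanced while still satisfying $2m_i\le|m|+2$ for all $i$, then \Cref{lem:F} applies directly and yields $\theta_m\in D(m)$; this branch needs no induction, so the real work is the balanced case.

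For balanced $m$ I would first reduce to $m_1\le m_2$ by the $x\leftrightarrow y$ symmetry. The algebra automorphism $\sigma$ of $S$ interchanging $x$ and $y$ swaps $\alpha_1,\alpha_2$ and fixes the lines $\Ker(\alpha_3),\Ker(\alpha_4)$, so it carries $D(m)$ isomorphically onto $D(\tilde m)$ with $\tilde m=(m_2,m_1,m_3,m_3)$. A direct inspection of \Cref{def:main} --- the same computation that produces $g_m=-f_m(y,x)$ in the proof of \Cref{lem:A} --- gives $\sigma(f_m)=(-1)^{m_3}g_{\tilde m}$ and $\sigma(g_m)=(-1)^{m_3}f_{\tilde m}$, hence $\sigma(\theta_m)=(-1)^{m_3+1}\theta_{\tilde m}$. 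Therefore $\theta_m\in D(m)$ if and only if $\theta_{\tilde m}\in D(\tilde m)$, and we may assume $m_1\le m_2$.

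The main step is then induction on the odd parameter $m_1$. The base case $m_1=1$ is exactly \Cref{lem:G}. For the inductive step let $m$ be balanced with $m_1\ge 3$; because we have arranged $m_2\ge m_1\ge 3$, the multiplicities $\mu=(m_1-2,m_2-2,m_3,m_3)$ and $\mu'=(m_1-2,m_2+2,m_3,m_3)$ are legitimate, and I would apply \Cref{cor:E'} with $\mu''=m$. The hypotheses are elementary bookkeeping. Writing $d=\tfrac{|m|}{2}-1$, balancedness of $m$ gives $m_1,m_2,m_3\le d$; since $|\mu|=|m|-4$ one has $d_\mu=d-2$, and the bounds $m_1-2\le d-2$, $m_2-2\le d-2$ are immediate while $m_3\le d-2$ is forced by $m_1+m_2\ge 6$, so $\mu$ is balanced. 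For $\mu'$ one has $|\mu'|=|m|$ and $m_2\le d$, whence $2\mu'_i\le|\mu'|+2$ for every $i$. Both $\mu$ and $\mu'$ have first coordinate $m_1-2<m_1$ and again satisfy first $\le$ second, so $\theta_\mu\in D(\mu)$ and $\theta_{\mu'}\in D(\mu')$ follow from the inductive hypothesis, with the single caveat that $\mu'$ fails to be balanced precisely when $m_2=d$ (its second coordinate is then $d+2$); in that one case $\theta_{\mu'}\in D(\mu')$ is supplied instead by \Cref{lem:F}. \Cref{cor:E'} then delivers $\theta_m=\theta_{\mu''}\in D(\mu'')=D(m)$.

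I expect the genuine obstacle to be the consistency of this bookkeeping rather than any single computation (the computations themselves live in \Cref{lem:D,lem:E}): one must verify that every multiplicity cycled back into the induction still meets the running hypothesis $2m_i\le|m|+2$, and in particular that the lone non-balanced input $\mu'$ (the case $m_2=d$) is caught by \Cref{lem:F} rather than by the inapplicable balanced induction. The reduction to $m_1\le m_2$ is exactly what prevents the recursion from ever generating the forbidden index $m_2-2<1$, which is why the symmetry observation has to come first.
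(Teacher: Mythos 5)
Your proposal is correct and follows the paper's own route: induction on $m_1$ with base case \Cref{lem:G}, non-balanced multiplicities supplied outright by \Cref{lem:F}, and the inductive step executed by feeding $\mu=(m_1-2,m_2-2,m_3,m_3)$ and $\mu'=(m_1-2,m_2+2,m_3,m_3)$ into \Cref{cor:E'} (the paper writes this as passing from $m=(2k-1,2n-1,l,l)$ and $m'=(2k-1,2n+3,l,l)$ to $m''=(2k+1,2n+1,l,l)$, which is the same step). Your bookkeeping checks out: balancedness of $m$ with $m_1,m_2\ge 3$ does force $\mu$ balanced (your observation that $m_3\le d-2$ amounts to $m_1+m_2\ge 6$ is right), $\mu'$ always satisfies $2\mu'_i\le|\mu'|+2$, and the parity of $|m|\in 4\ZZ$ indeed makes $m_2=d$ the unique way $\mu'$ can fail to be balanced, which \Cref{lem:F} then covers --- the paper absorbs this same case by phrasing its induction hypothesis over all multiplicities satisfying $2\mu_i\le|\mu|+2$ rather than only balanced ones, a cosmetic rather than substantive difference. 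The one genuine divergence is your preliminary $x\leftrightarrow y$ reduction to $m_1\le m_2$ via $\sigma(\theta_m)=(-1)^{m_3+1}\theta_{\tilde m}$ (the identities $\sigma(f_m)=(-1)^{m_3}g_{\tilde m}$ and $\sigma(g_m)=(-1)^{m_3}f_{\tilde m}$ do hold by direct comparison in \Cref{def:main}, since swapping $m_1\leftrightarrow m_2$ fixes $d$ and $m_1+m_2-d$). The paper performs no such reduction, and its step as literally written requires $n\ge 1$, i.e.\ $m_2\ge 3$, so it does not reach balanced multiplicities with $m_2=1<m_1$ such as $(3,1,2,2)$, where the recursion would demand the forbidden second coordinate $m_2-2=-1$; the paper evidently treats this symmetric case as implicit. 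Your explicit symmetry step closes exactly that corner, so your argument is, if anything, slightly more complete than the published one.
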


\begin{proof}
We show the lemma by the induction on $m_1$.

First, we consider the base case.
As \Cref{lem:G,lem:F}, we have already shown the case where $m_1=1$ 
whether $m$ is balanced or not.

Next, we consider the induction step.
Assume that
$\theta_{\mu} \in D(\mu)$ 
for all multiplicity $\mu=(\mu_1,\mu_2,\mu_3,\mu_3)$ 
that satisfy 
$2m_i \leq |m|+2$ for all $i$, $\mu_1 \leq 2k-1$, $\mu_1,\mu_2 \in 2\ZZ+1$ and $|\mu| \in 4\ZZ$.
Let $m''=(2k+1,2n+1,l,l)$ be a multiplicity such that
$k,n,l\in \ZZ_{>0}$, $|m''|=2(k+n+l+1)\in 4\ZZ$ and $m''$ is balanced.
By assumptions, we have $k+n+l\in 2\ZZ+1$ and $2n+1 \leq 2k+2l$. Hence, since
$2n+3 \leq 2(k+l+1)$ 
if and only if
$2n+3 \leq 2(k+l+1)-1
=(2k-1)+l+l+2$,
we can consider two multiplicities
$m=(2k-1,2n-1,l,l)$ and $m'=(2k-1,2n+3,l,l)=(m'_1,m'_2,m'_3,m'_3)$, where 
$m$ is balanced and $m'$ satisfy $2m'_i \leq |m'|+2$ for all $i$.
By induction hypothesis, applying \Cref{cor:E'}
to $m$ and $m'$, $\theta_{m''} \in D(m'')$.
Note that by \Cref{lem:F}, 
if $m''$ is not balanced but satisfies $2m''_i \leq |m''|+2$ for all $i$, 
$\theta_{m''}\in D(m'')$.

Therefore we complete the proof.
\end{proof}

\begin{lemma}
  \label{lem:M}
Let $m=(m_1,m_2,m_3,m_3)$ satisfy
$m_1$,$m_2\in 2\ZZ+1$, $|m|\in 4\ZZ$.
Let $m''=(m_1+2,m_2+2,m_3,m_3)$.
If $m$ is balanced, then
$D(m)
=\langle \theta_{m}, \theta_{m''} \rangle_{S}$.
\end{lemma}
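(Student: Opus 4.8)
The plan is to verify the three hypotheses of Saito's criterion (\Cref{thm:Saito}) for the pair $(\theta_m,\theta_{m''})$: that both derivations lie in $D(m)$, that their degrees sum to $|m|$, and that they are linearly independent over $S$. Since $m$ is balanced we have $2m_i\le|m|-1\le|m|+2$, so $\theta_m\in D(m)$ by \Cref{lem:I}. For $\theta_{m''}$ I would first note that $m''$ is again balanced (for instance $2m''_1=2m_1+4\le(|m|-1)+4=|m''|-1$ using $2m_1\le|m|-1$, and similarly in the other coordinates), so \Cref{lem:I} gives $\theta_{m''}\in D(m'')$; and since $m''_i\ge m_i$ for every $i$ we have $\alpha_i^{m''_i}S\subseteq\alpha_i^{m_i}S$, whence $D(m'')\subseteq D(m)$ and therefore $\theta_{m''}\in D(m)$.

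By \Cref{def:main} the derivation $\theta_m$ is homogeneous of degree $d=\frac{|m|}{2}-1$, while $\theta_{m''}$ is homogeneous of degree $\frac{|m''|}{2}-1=d+2$, so their degrees sum to $2d+2=|m|$, as required by \Cref{thm:Saito}. It therefore remains only to prove $S$-linear independence. Because $S$ is an integral domain and $\Der(S)$ is free of rank two, the derivations $\theta_m=f_m\der_x-g_m\der_y$ and $\theta_{m''}=f_{m''}\der_x-g_{m''}\der_y$ are independent over $S$ if and only if the coefficient determinant $W=f_mg_{m''}-f_{m''}g_m$ is a nonzero polynomial.

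I would establish $W\neq 0$ by comparing the orders of vanishing along $x=0$ of the two products. From the factored form in \Cref{def:main}, $x^{m_1}$ divides $f_m$ and the quotient does not vanish at $x=0$ (its $y^{d-m_1}$-coefficient equals $\pm\frac{1}{m_1!!\,(d-m_1)!!}\neq0$), so $x^{m_1}$ exactly divides $f_m$; the same reasoning applied to $m''$ gives that $x^{m_1+2}$ exactly divides $f_{m''}$. On the other hand, specializing $x=0$ leaves in $g_m$ only the $i=0$ term, a nonzero multiple of $y^{d}$, so $x\nmid g_m$, and likewise $x\nmid g_{m''}$. Comparing $x$-adic valuations then gives $v_x(f_mg_{m''})=m_1\neq m_1+2=v_x(f_{m''}g_m)$, so the two products cannot be equal and $W\neq0$. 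Saito's criterion then yields $D(m)=\langle\theta_m,\theta_{m''}\rangle_S$.

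The step that needs the most care is the valuation computation, since it hinges on the relevant coefficients of \Cref{def:main} being genuinely nonzero rather than accidentally vanishing. The key observation that makes everything go through is a parity one: since $|m|\in4\ZZ$ the integer $d$ is odd, and since $m_1,m_2$ are odd the argument $m_1+m_2-d$ is odd; consequently every double Pochhammer factor $\dps{m_1+m_2-d}{k}$ occurring in $f_m,g_m,f_{m''},g_{m''}$ is a product of odd integers and hence never zero. This guarantees that the constant and leading coefficients invoked above are nonzero, so that the exact divisibilities $x^{m_1}\mid\mid f_m$, $x^{m_1+2}\mid\mid f_{m''}$ and $x\nmid g_m,g_{m''}$ all hold and the valuation mismatch is real.
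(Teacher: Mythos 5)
Your proof is correct and follows essentially the same route as the paper: membership via \Cref{lem:I}, the degree count $d+(d+2)=|m|$, and Saito's criterion after showing the coefficient determinant $f_m g_{m''}-f_{m''}g_m$ is nonzero by comparing powers of $x$ in the two cross products. The only (harmless) difference is that you compare lowest $x$-degrees (the $x$-adic valuations $m_1$ versus $m_1+2$) where the paper compares highest $x$-degrees ($2d-m_2$ versus $2d-m_2+2$), and you spell out the parity observation ($m_1+m_2-d$ is odd, so the double Pochhammer symbols never vanish) that the paper's argument uses only implicitly.
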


\begin{proof}
First, it is obvious that
$\theta_{m},\theta_{m''} \in D(m)$
by \Cref{lem:I}.

Second, to prove the independence of $\theta_{m}$ and $\theta_{m''}$,
consider the following matrix $M$:

\[
M=
\left(
\begin{array}{cc}
f_{m} & f_{m''}\\
g_{m} & g_{m''}
\end{array}
\right),\\
\]
where

\begin{align*}
f_{m}&=
\sum_{i=0}^{\frac{d-m_{1}}{2}} (-1)^i 
\frac{
\dps{m_{1}+m_{2}-d}
{\frac{d-m_{1}}{2}-i}
}
{(d-m_{1}-2i)!!(d-2i)!!(2i)!!}
x^{d-2i}y^{2i},\\
f_{m''}&=
\sum_{i=0}^{\frac{d-m_{1}}{2}} (-1)^i 
\frac{
\dps{m_{1}+m_{2}-d+2}
{\frac{d-m_{1}}{2}-i}
}
{(d-m_{1}-2i)!!(d+2-2i)!!(2i)!!}
x^{d+2-2i}y^{2i},\\
g_{m}&=
\sum_{i=0}^{\frac{d-m_{2}}{2}} (-1)^{i+m_{3}} 
\frac{
\dps{m_{1}+m_{2}-d}
{\frac{d-m_{2}}{2}-i}
}
{(d-m_{2}-2i)!!(d-2i)!!(2i)!!}
x^{2i}y^{d-2i},\\
g_{m''}&=
\sum_{i=0}^{\frac{d-m_{2}}{2}} (-1)^{i+m_{3}} 
\frac{
\dps{m_{1}+m_{2}-d+2}
{\frac{d-m_{2}}{2}-i}
}
{(d-m_{2}-2i)!!(d+2-2i)!!(2i)!!}
x^{2i}y^{d+2-2i}.
\end{align*}
Since
the degree of $f_{m}g_{m''}$ with respect to $x$ is $2d-m_2$ and
the degree of $f_{m''}g_{m}$ with respect to $x$ is $2d-m_2+2$, 
we have 
$\det M = f_{m}g_{m''}-f_{m''}g_{m} \neq 0$.
Therefore, 
$\theta_{m}$ and $\theta_{m''}$ are independent over $S$.

Hence, by \Cref{thm:Saito}, we have
$D(m)
=\langle \theta_{m}, \theta_{m''} \rangle_{S}$
and $\exp(m)=(\frac{|m|}{2}-1,\frac{|m|}{2}+1)$.
\end{proof}

\Cref{lem:I,lem:M} complete the proof of \Cref{thm:main}.

\section{Application to Coxeter Multiarrangements of Type $A_2$}
\label{sec:application}
In this section, we obtain another expression 
of the lower derivations of bases for derivation modules for the Coxeter multiarrangements of type $A_2$, 
which was completely solved in \cite{MR2328057}, 
for a special case. 

Let $\alpha_{1}'=\alpha_1, \alpha_{2}'=\alpha_2, \alpha_{3}'=\alpha_4$ 
and we define $\AAA'=\Set{\Ker(\alpha_{1}'),\Ker(\alpha_{2}'),\Ker(\alpha_{3}')}$. 
By a linear transformation, the arrangement $\AAA'$ can be identified with the Coxeter arrangement of 
type $A_2$.
For a multiplicity $m=(m_1,m_2,m_3)$ on $\AAA'$, 
we define $D'(m)$ to be $D(\AAA',m)
=\Set{\theta \in \Der (S) |\forall i, \theta(\alpha_{i}')\in {\alpha_{i}'}^{m_{i}}S}$.

\begin{definition}
 \label{def:J'}
Let $m=(a,a,b)$ satisfy
$b\in 2\ZZ+1$, $|m|=2a+b\in 4\ZZ+3$.
Let $\mu=(1,b,a,a)$
 and $d=\frac{|\mu|}{2}-1=\frac{2a+b-1}{2}$.
We define $f'_m,g'_m,\theta'_m$ by
\begin{align*}
f'_{m}(x,y)&=f_{\mu}(x-y,x+y)-g_{\mu}(x-y,x+y)\\
&=
\sum_{i=0}^{\frac{d-1}{2}} (-1)^i 
\frac{
\dps{1+b-d}
{\frac{d-1}{2}-i}
}
{(d-2i)!(2i)!!}
(x-y)^{d-2i}(x+y)^{2i}\\
&-
\sum_{i=0}^{\frac{d-b}{2}} (-1)^{i+a} 
\frac{
\dps{1+b-d}
{\frac{d-b}{2}-i}
}
{(d-b-2i)!!(d-2i)!!(2i)!!}
(x-y)^{2i}(x+y)^{d-2i},\\
g'_{m}(x,y)&=f_{\mu}(x-y,x+y)+g_{\mu}(x-y,x+y)\\
&=
\sum_{i=0}^{\frac{d-1}{2}} (-1)^i 
\frac{
\dps{1+b-d}
{\frac{d-1}{2}-i}
}
{(d-2i)!(2i)!!}
(x-y)^{d-2i}(x+y)^{2i}\\
&+
\sum_{i=0}^{\frac{d-b}{2}} (-1)^{i+a} 
\frac{
\dps{1+b-d}
{\frac{d-b}{2}-i}
}
{(d-b-2i)!!(d-2i)!!(2i)!!}
(x-y)^{2i}(x+y)^{d-2i},\\
\theta'_{m}
&=f'_{m}(x,y)\der_{x}-g'_{m}(x,y)\der_{y}.
\end{align*}
\end{definition}

By \Cref{thm:main} and considering a coordinate transformation, 
we have $\theta'_m$ above is in $D'(m)$ if $m$ is balanced.

\begin{cor}
 \label{cor:A_2}
Let $m=(a,a,b)$ satisfy
$b\in 2\ZZ+1$, $|m|=2a+b\in 4\ZZ+3$.
If $m$ is balanced, then
$\theta'_{m}\in D'(m)$.
\end{cor}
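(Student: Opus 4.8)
The plan is to obtain $\theta'_m$ as the image of the $B_2$-derivation $\theta_\mu$ under the linear change of variables built into \Cref{def:J'}, and then to read the three defining divisibilities of $D'(m)$ directly off the four divisibilities satisfied by $\theta_\mu$. First I would check that the multiplicity $\mu=(1,b,a,a)$ meets the hypotheses of \Cref{thm:main}. Indeed $\mu_1=1$ and $\mu_2=b$ lie in $2\ZZ+1$, and $|\mu|=2a+b+1=|m|+1\in 4\ZZ$ since $|m|\in 4\ZZ+3$; moreover balancedness of $m=(a,a,b)$ gives $b\le 2a$, so $2\mu_i\le|\mu|+2$ holds for every $i$. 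Hence \Cref{thm:main} yields $\theta_\mu=f_\mu\der_x-g_\mu\der_y\in D(\mu)$, i.e. $\theta_\mu(\alpha_i)\in\alpha_i^{\mu_i}S$ for $i=1,2,3,4$. Writing these out with $\theta_\mu(x)=f_\mu$, $\theta_\mu(y)=-g_\mu$, $\theta_\mu(x-y)=f_\mu+g_\mu$ and $\theta_\mu(x+y)=f_\mu-g_\mu$, this is the package
\begin{align*}
x\mid f_\mu,\qquad y^{b}\mid g_\mu,\qquad (x-y)^{a}\mid (f_\mu+g_\mu),\qquad (x+y)^{a}\mid (f_\mu-g_\mu).
\end{align*}

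Next I would introduce the $\KK$-algebra isomorphism $\sigma\colon S\to S$ determined by $\sigma(x)=x-y$ and $\sigma(y)=x+y$; it is an isomorphism because the associated linear map has determinant $2\neq 0$, and being an isomorphism it preserves divisibility. Using $\sigma(x-y)=-2y$ and $\sigma(x+y)=2x$, applying $\sigma$ to the four relations above transports them to
\begin{align*}
(x-y)\mid \sigma(f_\mu),\qquad (x+y)^{b}\mid \sigma(g_\mu),\qquad y^{a}\mid \sigma(f_\mu+g_\mu),\qquad x^{a}\mid \sigma(f_\mu-g_\mu).
\end{align*}
By \Cref{def:J'} the polynomials of $\theta'_m$ are $f'_m=\sigma(f_\mu)-\sigma(g_\mu)=\sigma(f_\mu-g_\mu)$ and $g'_m=\sigma(f_\mu)+\sigma(g_\mu)=\sigma(f_\mu+g_\mu)$, so the last two transported divisibilities say precisely that $x^{a}\mid f'_m$ and $y^{a}\mid g'_m$.

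Finally I would verify the three membership conditions for $D'(m)$ with $m=(a,a,b)$, where $\alpha_1'=x$, $\alpha_2'=y$, $\alpha_3'=x+y$ carry multiplicities $a,a,b$ respectively. From the previous step $\theta'_m(\alpha_1')=f'_m\in x^{a}S$ and $\theta'_m(\alpha_2')=-g'_m\in y^{a}S$, while $\theta'_m(\alpha_3')=f'_m-g'_m=-2\sigma(g_\mu)\in(x+y)^{b}S$ by the transported divisibility $(x+y)^{b}\mid\sigma(g_\mu)$. Therefore $\theta'_m\in D'(m)$. I do not anticipate a real obstacle: the argument is a clean transport of divisibility, and the only points demanding care are confirming that balancedness of $m$ supplies the (weaker) hypothesis $2\mu_i\le|\mu|+2$ of \Cref{thm:main} for $\mu$, and correctly matching the combinations $f_\mu\pm g_\mu$ with the images $x^{a}$, $y^{a}$, $(x+y)^{b}$ under $\sigma$.
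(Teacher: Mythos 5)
Your proof is correct and takes essentially the same route as the paper: both set $\mu=(1,b,a,a)$, invoke \Cref{thm:main} to obtain the four divisibilities $x\mid f_\mu$, $y^{b}\mid g_\mu$, $(x-y)^{a}\mid(f_\mu+g_\mu)$, $(x+y)^{a}\mid(f_\mu-g_\mu)$, and transport them through the substitution $(x,y)\mapsto(x-y,x+y)$ to verify the three defining conditions of $D'(m)$. Your only refinements are cosmetic --- packaging the substitution as an explicit algebra isomorphism $\sigma$ and spelling out that balancedness of $m$ gives the hypothesis $2\mu_i\le|\mu|+2$, which the paper asserts without detail.
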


\begin{proof}
Let $\mu = (1,b,a,a)$. 
Since $\mu$ satisfies the condition of \Cref{thm:main},
$\theta_{\mu} \in D(\mu)$, that means 
$x \mid f_{\mu}$, $y^b \mid (-g_{\mu})$, 
$(x-y)^a \mid (f_{\mu}+g_{\mu})$, and $(x+y)^a \mid (f_{\mu}-g_{\mu})$. 
For $\alpha'_1=x$,
since
$\theta'_{m}(x)=
(f_{\mu}-g_{\mu})(x-y,x+y)$, it follows that 
$((x-y)+(x+y))^a=(2^a \cdot x^a) \mid \theta'_{m}(x)$.
For $\alpha'_2=y$,
since
$\theta'_{m}(y)=
(f_{\mu}+g_{\mu})(x-y,x+y)$, it follows that 
$((x-y)-(x+y))^a=((-2)^a \cdot y^a) \mid \theta'_{m}(y)$.
For $\alpha'_3=x+y$,
since
$\theta'_{m}(x+y)=
-2g_{\mu}(x-y,x+y)$, it follows that 
$2(x+y)^b \mid \theta'_{m}(x+y)$. 
Therefore, since 
${\alpha'_1}^a \mid \theta'_{m}(\alpha'_1)$, 
${\alpha'_2}^a \mid \theta'_{m}(\alpha'_2)$ and 
${\alpha'_3}^b \mid \theta'_{m}(\alpha'_3)$, we have
$\theta'_{m} \in D'(m)$.
\end{proof}

We give some examples of $\theta'_m$.
\begin{example}
For some $m$, the derivations $\theta'_m$ are the following:
\begin{align*}
    \theta'_{(2,2,3)}&=
    \textstyle\left(-\frac{2}{3} \, x^{3} - 2 \, x^{2} y\right)\partial_x+\left(-2 \, x y^{2} - \frac{2}{3} \, y^{3}\right)\partial_y
,
\\
    \theta'_{(3,3,5)}&=
    \textstyle\left(\frac{2}{15} \, x^{5} + \frac{2}{3} \, x^{4} y + \frac{4}{3} \, x^{3} y^{2}\right)\partial_x+\left(\frac{4}{3} \, x^{2} y^{3} + \frac{2}{3} \, x y^{4} + \frac{2}{15} \, y^{5}\right)\partial_y
,
\\
    \theta'_{(4,4,3)}&=
    \textstyle\left(\frac{2}{5} \, x^{5} + \frac{2}{3} \, x^{4} y\right)\partial_x+\left(\frac{2}{3} \, x y^{4} + \frac{2}{5} \, y^{5}\right)\partial_y
,
\\
    \theta'_{(4,4,7)}&=
    \textstyle\left(-\frac{2}{105} \, x^{7} - \frac{2}{15} \, x^{6} y - \frac{2}{5} \, x^{5} y^{2} - \frac{2}{3} \, x^{4} y^{3}\right)\partial_x\\
    &\textstyle+\left(-\frac{2}{3} \, x^{3} y^{4} - \frac{2}{5} \, x^{2} y^{5} - \frac{2}{15} \, x y^{6} - \frac{2}{105} \, y^{7}\right)\partial_y
,
\\
    \theta'_{(5,5,5)}&=
    \textstyle\left(-\frac{8}{105} \, x^{7} - \frac{4}{15} \, x^{6} y - \frac{4}{15} \, x^{5} y^{2}\right)\partial_x+\left(-\frac{4}{15} \, x^{2} y^{5} - \frac{4}{15} \, x y^{6} - \frac{8}{105} \, y^{7}\right)\partial_y
,
\\
    \theta'_{(5,5,9)}&=
    \textstyle\left(\frac{2}{945} \, x^{9} + \frac{2}{105} \, x^{8} y + \frac{8}{105} \, x^{7} y^{2} + \frac{8}{45} \, x^{6} y^{3} + \frac{4}{15} \, x^{5} y^{4}\right)\partial_x\\
    &\textstyle+\left(\frac{4}{15} \, x^{4} y^{5} + \frac{8}{45} \, x^{3} y^{6} + \frac{8}{105} \, x^{2} y^{7} + \frac{2}{105} \, x y^{8} + \frac{2}{945} \, y^{9}\right)\partial_y
,
\\
    \theta'_{(6,6,3)}&=
    \textstyle\left(-\frac{4}{21} \, x^{7} - \frac{4}{15} \, x^{6} y\right)\partial_x+\left(-\frac{4}{15} \, x y^{6} - \frac{4}{21} \, y^{7}\right)\partial_y
,
\\
    \theta'_{(6,6,7)}&=
    \textstyle\left(\frac{2}{189} \, x^{9} + \frac{2}{35} \, x^{8} y + \frac{4}{35} \, x^{7} y^{2} + \frac{4}{45} \, x^{6} y^{3}\right)\partial_x\\
    &\textstyle+\left(\frac{4}{45} \, x^{3} y^{6} + \frac{4}{35} \, x^{2} y^{7} + \frac{2}{35} \, x y^{8} + \frac{2}{189} \, y^{9}\right)\partial_y
,
\\
    \theta'_{(6,6,11)}&=
    \textstyle\left(-\frac{2}{10395} \, x^{11} - \frac{2}{945} \, x^{10} y - \frac{2}{189} \, x^{9} y^{2} - \frac{2}{63} \, x^{8} y^{3} - \frac{4}{63} \, x^{7} y^{4} - \frac{4}{45} \, x^{6} y^{5}\right)\partial_x\\
    &\textstyle+\left(-\frac{4}{45} \, x^{5} y^{6} - \frac{4}{63} \, x^{4} y^{7} - \frac{2}{63} \, x^{3} y^{8} - \frac{2}{189} \, x^{2} y^{9} - \frac{2}{945} \, x y^{10} - \frac{2}{10395} \, y^{11}\right)\partial_y
,
\\
    \theta'_{(7,7,5)}&=
    \textstyle\left(\frac{2}{63} \, x^{9} + \frac{2}{21} \, x^{8} y + \frac{8}{105} \, x^{7} y^{2}\right)\partial_x+\left(\frac{8}{105} \, x^{2} y^{7} + \frac{2}{21} \, x y^{8} + \frac{2}{63} \, y^{9}\right)\partial_y

.
\end{align*}
\end{example}

\begin{remark}
For an arbitrary balanced multiplicity $m$ on $\AAA'$, 
in \cite{MR2328057}, Wakamiko has obtained a basis for $D'(m)$ explicitly 
and showed that the exponents are given by $(\lfloor \frac{|m|}{2}\rfloor,\lceil \frac{|m|}{2}\rceil)$.
Note that the coefficients of derivations of the basis in \cite{MR2328057} 
are polynomials with integer coefficients in the variables $x,y$.
If the difference of exponents is not zero, then the lower derivation of a basis is unique up to scalar.
The derivation $\theta'$ in \Cref{cor:A_2} is one of such derivations 
different from that of \cite{MR2328057}. 
The lower basis elements $\theta_\Sigma(m,\frac{|m|-1}{2})$ for $D(\AAA',m)$ in \cite{MR2328057} 
are the following: 
\begin{align*}
    \theta_{\Sigma}((2,2,3),3)&=
    \textstyle\left(x^{3} + 3 \, x^{2} y\right)\partial_x+\left(3 \, x y^{2} + y^{3}\right)\partial_y\\
    &=\textstyle-\frac{3}{2}\theta'_{(2,2,3)}
,
\\
    \theta_{\Sigma}((3,3,5),5)&=
    \textstyle\left(x^{5} + 5 \, x^{4} y + 10 \, x^{3} y^{2}\right)\partial_x+\left(10 \, x^{2} y^{3} + 5 \, x y^{4} + y^{5}\right)\partial_y\\
    &=\textstyle\frac{15}{2}\theta'_{(3,3,5)}
,
\\
    \theta_{\Sigma}((4,4,3),5)&=
    \textstyle\left(6 \, x^{5} + 10 \, x^{4} y\right)\partial_x+\left(10 \, x y^{4} + 6 \, y^{5}\right)\partial_y\\
    &=\textstyle15\theta'_{(4,4,3)}
,
\\
    \theta_{\Sigma}((4,4,7),7)&=
    \textstyle\left(x^{7} + 7 \, x^{6} y + 21 \, x^{5} y^{2} + 35 \, x^{4} y^{3}\right)\partial_x\\
    &\textstyle+\left(35 \, x^{3} y^{4} + 21 \, x^{2} y^{5} + 7 \, x y^{6} + y^{7}\right)\partial_y\\
    &=\textstyle-\frac{105}{2}\theta'_{(4,4,7)}
,
\\
    \theta_{\Sigma}((5,5,5),7)&=
    \textstyle\left(50 \, x^{7} + 175 \, x^{6} y + 175 \, x^{5} y^{2}\right)\partial_x\\
    &\textstyle+\left(175 \, x^{2} y^{5} + 175 \, x y^{6} + 50 \, y^{7}\right)\partial_y\\
    &=\textstyle-\frac{2625}{4}\theta'_{(5,5,5)}
,
\\
    \theta_{\Sigma}((5,5,9),9)&=
    \textstyle\left(x^{9} + 9 \, x^{8} y + 36 \, x^{7} y^{2} + 84 \, x^{6} y^{3} + 126 \, x^{5} y^{4}\right)\partial_x\\
    &\textstyle+\left(126 \, x^{4} y^{5} + 84 \, x^{3} y^{6} + 36 \, x^{2} y^{7} + 9 \, x y^{8} + y^{9}\right)\partial_y\\
    &=\textstyle\frac{945}{2}\theta'_{(5,5,9)}
,
\\
    \theta_{\Sigma}((6,6,3),7)&=
    \textstyle\left(15 \, x^{7} + 21 \, x^{6} y\right)\partial_x+\left(21 \, x y^{6} + 15 \, y^{7}\right)\partial_y\\
    &=\textstyle-\frac{315}{4}\theta'_{(6,6,3)}
,
\\
    \theta_{\Sigma}((6,6,7),9)&=
    \textstyle\left(490 \, x^{9} + 2646 \, x^{8} y + 5292 \, x^{7} y^{2} + 4116 \, x^{6} y^{3}\right)\partial_x\\
    &\textstyle+\left(4116 \, x^{3} y^{6} + 5292 \, x^{2} y^{7} + 2646 \, x y^{8} + 490 \, y^{9}\right)\partial_y\\
    &=\textstyle46305\theta'_{(6,6,7)}
,
\\
    \theta_{\Sigma}((6,6,11),11)&=
    \textstyle\left(x^{11} + 11 \, x^{10} y + 55 \, x^{9} y^{2} + 165 \, x^{8} y^{3} + 330 \, x^{7} y^{4} + 462 \, x^{6} y^{5}\right)\partial_x\\
    &\textstyle+\left(462 \, x^{5} y^{6} + 330 \, x^{4} y^{7} + 165 \, x^{3} y^{8} + 55 \, x^{2} y^{9} + 11 \, x y^{10} + y^{11}\right)\partial_y\\
    &=\textstyle-\frac{10395}{2}\theta'_{(6,6,11)}
,
\\
    \theta_{\Sigma}((7,7,5),9)&=
    \textstyle\left(490 \, x^{9} + 1470 \, x^{8} y + 1176 \, x^{7} y^{2}\right)\partial_x\\
    &\textstyle+\left(1176 \, x^{2} y^{7} + 1470 \, x y^{8} + 490 \, y^{9}\right)\partial_y\\
    &=\textstyle15435\theta'_{(7,7,5)}

.
\end{align*}
\end{remark}

\bibliography{by-mr,by-arxiv}
\bibliographystyle{amsplain-url}

\end{document}